\numberwithin{equation}{section}
\newtheorem{theorem}{Theorem}[section]
\newtheorem{lemma}[theorem]{Lemma}
\newtheorem{definition}[theorem]{Definition}
\newtheorem{remark}[theorem]{Remark}
\begin{document}
	
\title[\hfil Regularity for parabolic normalized $p(x,t)$-Laplace equation] {Gradient H\"{o}lder regularity for parabolic normalized $p(x,t)$-Laplace equation}

\author[Y. Fang and C. Zhang  \hfil \hfilneg]{Yuzhou Fang and  Chao Zhang$^*$}
 
\thanks{$^*$ Corresponding author.}
 
\address{Yuzhou Fang \hfill\break School of Mathematics, Harbin Institute of Technology, Harbin 150001, China}
\email{18b912036@hit.edu.cn}

\address{Chao Zhang  \hfill\break School of Mathematics and Institute for Advanced Study in Mathematics, Harbin Institute of Technology, Harbin 150001, China}
\email{czhangmath@hit.edu.cn}

\subjclass[2010]{Primary: 35R05, 35K20; Secondary: 46E30.}
\keywords{Regularity; parabolic normalized $p(x,t)$-Laplacian; stochastic differential game; viscosity solution}

\maketitle

\begin{abstract}
We consider interior H\"{o}lder regularity of the spatial gradient of viscosity solutions to the normalized $p(x,t)$-Laplace equation
$$
u_t=\left(\delta_{ij}+(p(x,t)-2)\frac{u_i u_j}{|Du|^2}\right)u_{ij}
$$
with some suitable assumptions on $p(x,t)$, which arises naturally from a two-player zero-sum stochastic differential game with probabilities depending on space and time.
\end{abstract}

\section{Introduction}
\label{sec-1}

Let $p(x,t)\in C^1_{\rm loc}(\mathbb{R}^{n+1})$  and $1<p_-:=\inf p(x,t)\leq \sup p(x,t)=:p_+<\infty$. In this work, we investigate the higher regularity properties of viscosity solutions to the following parabolic normalized $p(x,t)$-Laplace equation
\begin{equation}
\label{1-1}
u_t(x,t)=\Delta^N_{p(x,t)}u(x,t),
\end{equation}
where $\Delta^N_{p(x,t)}$ is the normalized $p(x,t)$-Laplace operator  defined as
$$
\Delta^N_{p(x,t)}u:=\Delta u+(p(x,t)-2)\left\langle D^2u\frac{Du}{|Du|},\frac{Du}{|Du|}\right\rangle=
\left(\delta_{ij}+(p(x,t)-2)\frac{u_i u_j}{|Du|^2}\right)u_{ij}.
$$
Here the summation convention is utilized and the vector $Du$ is the gradient with respect to the spatial variable $x$.

Over the last decade, Eq. \eqref{1-1} and related normalized equations in non-divergence form have received considerable attention, partly due to the stochastic zero-sum tug-of-war games defined by Peres-Schramm-Sheffield-Wilson in  \cite{PS08,PS09} and Manfredi-Parviainen-Rossi in \cite{MPR10}.
For the case that $p(x)$ is constant,  Luiro-Parviainen-Saksman \cite{LPS13} proved the Harnack's inequality for the homogeneous normalized $p$-Laplace equation $-\Delta^N_pu=0$. Ruosteenoja \cite{Ruo16} studied the local Lipschitz continuity and Harnack's inequality for the inhomogeneous version $-\Delta^N_p u=f$, and later it was extended to the local $C^{1,\alpha}$ regularity of viscosity solutions by Attouchi-Parviainen-Ruosteenoja in \cite{APR17}.
Furthermore, Siltakoski \cite{Sil18}  considered the normalized $p(x)$-Laplace equation
\begin{equation}
\label{1-2}
\Delta^N_{p(x)}u:=\left(\delta_{ij}+(p(x)-2)\frac{u_iu_j}{|Du|^2}\right)u_{ij}=0
\end{equation}
and showed that the viscosity solution is locally $C^{1,\alpha}$ regular by means of the equivalence between viscosity solutions to \eqref{1-2} and weak solutions to strong $p(x)$-Laplace equation
$$
\Delta^S_{p(x)}u=|Du|^{p(x)-2}\Delta^N_{p(x)}u.
$$
And the local $C^{1,\alpha}$ regularity of weak solutions of strong $p(x)$-Laplace equation has been obtained by Zhang-Zhou \cite{ZZ12}.  For more regularity results in elliptic situation, see for instance \cite{AR18,BD10,BD14,BM19,IS13,Wang94} and references therein.

On the other hand, regularity studies were extended to the parabolic normalized $p$-Laplace equation
\begin{equation}
\label{1-3}
\partial_tu=\Delta^N_pu.
\end{equation}
The existence, uniqueness as well as the long time behaviour of viscosity solutions and  the Lipschitz continuity in the spatial variables were investigated by Banerjee-Garofalo in \cite{BG13} and Does in \cite{Doe11}, respectively (see also \cite{BG15, BG2015, HL20, Juu11} for further results). More recently, Jin-Silvestre \cite{JS17} derived the local H\"{o}lder gradient estimates for Eq. \eqref{1-3} and  Jin-Silvestre-Imbert \cite{IJS19} extended the result to a more general equation
\begin{equation}
\label{1-4}
\partial_tu=|Du|^\gamma\left(\delta_{ij}+(p-2)\frac{u_i u_j}{|Du|^2}\right)u_{ij},
\end{equation}
where $p\in(1,+\infty)$ and $\gamma>-1$.  When $\gamma=0$, it is nothing but \eqref{1-3}; when $\gamma=p-2$, it is the usual parabolic $p$-Laplace equation
\begin{equation}
\label{1-5}
u_t={\rm div}(|Du|^{p-2}Du).
\end{equation}
It was well-known that viscosity solutions and weak solutions to \eqref{1-5} coincide (see \cite{JLM01}). Based on this equivalence and the $C^{1,\alpha}$ regularity of weak solutions to \eqref{1-5} in \cite{BF85,Wie86}, we find that the viscosity solutions are of class $C^{1,\alpha}$. For the inhomogeneous counterpart of \eqref{1-4}
\begin{equation}
\label{1-6}
\partial_tu-|Du|^\gamma\left(\delta_{ij}+(p-2)\frac{u_i u_j}{|Du|^2}\right)u_{ij}=f
\end{equation}
with $-1<\gamma<\infty$ and $1<p<\infty$, the local higher regularity properties of solutions to \eqref{1-6} have been studied in \cite{AP18,Att18,AR19}, provided that $f$ is bounded and continuous. For more results, one can refer to \cite{KKK14,LS15,MPR12,Ros11,Rud15} and references therein.

As interpreted in \cite{PR16,Hei18}, parabolic equations of the type considered in \eqref{1-1}  arise naturally from a two-player zero-sum stochastic differential game (SDG) with probabilities depending on space and time. It is defined in terms of an $n$-dimensional state process, and is driven by a $2n$-dimensional Brownian motion for $n\ge 2$.  As far as we know, the present setting is less studied and it exhibits interesting features both from the tug-of-war games and the mathematical viewpoint.
In particular, Parviainen-Ruosteenoja \cite{PR16} proved  the H\"{o}lder and Harnack estimates for a more general game that was called $p(x,t)$-game without using the PDE techniques and showed that the value functions of the game converge to the unique viscosity solution of the Dirichlet problem to the normalized $p(x, t)$-parabolic equation
$$
(n+p(x,t))u_t(x,t)=\Delta^N_{p(x,t)}u(x,t).
$$
In addition, Heino \cite{Hei18} formulated a stochastic differential game in continuous time and obtained that the viscosity solution to a terminal value problem involving the parabolic normalized $p(x,t)$-Laplace operator is unique under suitable assumptions. However, whether or not the spatial gradient $\nabla u$ of \eqref{1-1} is H\"{o}lder continuous was still unknown. In this paper we answer this question  and prove the interior H\"{o}lder continuity for the spatial gradient of viscosity solutions to \eqref{1-1}.


Let $Q_r:=B_r\times(-r^2,0]\subset\mathbb{R}^{n+1}$ be a parabolic cylinder, where $B_r$ is a ball in $\mathbb{R}^n$ centered at the origin with the radius $r>0$.
Our main result is stated as follows.
\begin{theorem}
\label{main}
Assume that $u$ is a viscosity solution to \eqref{1-1} in $Q_1$. If $1<p_-\leq p_+<\infty$ and $p(x,t)\in C^1(\overline{Q_1})$, then there exist two constants $\alpha\in (0,1)$ and $C$, both only depending on $n,p_-,p_+$, such that
$$
\|Du\|_{C^\alpha(Q_{1/2})}\leq C\|u\|_{L^\infty(Q_1)}
$$
and
$$
\sup_{Q_{1/2}}\frac{|u(x,t)-u(x,s)|}{|t-s|^{\frac{1+\alpha}{2}}}\leq C\|u\|_{L^\infty(Q_1)}.
$$
\end{theorem}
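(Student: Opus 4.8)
The plan is to adapt the compactness scheme used for the constant‑exponent parabolic normalized $p$-Laplace equation by Jin--Silvestre \cite{JS17} and Imbert--Jin--Silvestre \cite{IJS19}: regularize to obtain smooth approximants, prove the gradient estimates uniformly in the regularization parameter, and pass to the limit by stability of viscosity solutions. Concretely, for $\varepsilon\in(0,1)$ (mollifying $p$ if necessary) I would replace the operator by
$$
\mathcal{L}_\varepsilon u:=\operatorname{tr}\!\Bigl[\Bigl(\delta_{ij}+(p(x,t)-2)\tfrac{u_iu_j}{|Du|^2+\varepsilon^2}\Bigr)D^2u\Bigr],
$$
whose coefficient matrix is smooth in $\xi$, $C^1$ in $(x,t)$, and uniformly elliptic with ellipticity constants $\min\{1,p_--1\}$ and $\max\{1,p_+-1\}$; the problem $\partial_t u^\varepsilon=\mathcal{L}_\varepsilon u^\varepsilon$ in $Q_1$ with $u^\varepsilon=u$ on the parabolic boundary then has interior‑smooth solutions with $\|u^\varepsilon\|_{L^\infty(Q_1)}\le\|u\|_{L^\infty(Q_1)}$. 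The first a priori estimate is the uniform Lipschitz bound $\|Du^\varepsilon\|_{L^\infty(Q_{3/4})}\le C\|u\|_{L^\infty(Q_1)}$: I would derive it by a Bernstein argument, estimating $\eta^2|Du^\varepsilon|^2$ with a space--time cutoff $\eta$ through the equation satisfied by the first derivatives of $u^\varepsilon$, the uniform ellipticity controlling the leading terms while the lower‑order contributions of $Dp$ and $p_t$ are absorbed.

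The core step is a uniform interior $C^{1,\alpha}$ estimate for $u^\varepsilon$ in the parabolic sense, which I would prove through a Campanato‑type flatness improvement: there are $\alpha\in(0,1)$ and $\lambda\in(0,1)$, depending only on $n,p_-,p_+$, such that on every cylinder $Q_r(z_0)\subset Q_{3/4}$, if $u^\varepsilon$ is within $\sigma$ in sup norm of some function affine in $x$, then on $Q_{\lambda r}(z_0)$ it is within $\lambda^{1+\alpha}\sigma$ of another such function, up to a lower‑order term generated by the variation of $p$. This is established by the usual dichotomy. In the \emph{non-degenerate} case, where $Du^\varepsilon$ is close to a nonzero constant vector on $Q_r(z_0)$, the rescaled solution solves an equation whose coefficients are uniformly close to constant ones, and a compactness/contradiction argument — in which the limiting functions solve a constant‑coefficient \emph{linear} uniformly parabolic equation, hence are smooth with interior estimates and therefore $O(\lambda^2)$-close to affine at scale $\lambda$ — yields the improvement. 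In the \emph{degenerate} case, where $Du^\varepsilon$ is not close to any nonzero constant, one has $|Du^\varepsilon|$ small and $u^\varepsilon$ close to a constant, and the same compactness argument applies, the limit now solving the constant‑exponent equation, for which the interior $C^{1,\alpha}$ estimate of \cite{JS17} is invoked as a black box. Iterating the flatness over dyadic scales and summing (Campanato--Morrey) gives $\|u^\varepsilon\|_{C^{1,\alpha}(Q_{1/2})}\le C\|u\|_{L^\infty(Q_1)}$ uniformly in $\varepsilon$; here one must check that $\operatorname{osc}_{Q_r}p\to0$ as $r\to0$ renders the $p$-variation a vanishing perturbation along the iteration, so that $\alpha$ and $C$ depend only on $n,p_-,p_+$.

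With these uniform bounds, a subsequence of $u^\varepsilon$ converges in $C^1_{\mathrm{loc}}(Q_1)$ to a function that, by stability of viscosity solutions, solves \eqref{1-1} and, by the comparison principle, equals $u$; hence $u$ inherits $\|Du\|_{C^\alpha(Q_{1/2})}\le C\|u\|_{L^\infty(Q_1)}$. The temporal bound is then automatic: the flatness estimate says that near any $(x_0,t_0)\in Q_{1/2}$,
$$
|u(x,t)-u(x_0,t_0)-Du(x_0,t_0)\cdot(x-x_0)|\le C\|u\|_{L^\infty(Q_1)}\bigl(|x-x_0|+|t-t_0|^{1/2}\bigr)^{1+\alpha},
$$
and taking $x=x_0$ gives exactly $|u(x_0,t)-u(x_0,s)|\le C\|u\|_{L^\infty(Q_1)}|t-s|^{(1+\alpha)/2}$ for $|t-s|$ small, the remaining range being trivial from $\|u\|_{L^\infty}$.

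The principal obstacle is the flatness improvement, and its difficulty is twofold. First, one is tempted to differentiate the regularized equation and run linear theory on $Du^\varepsilon$, but this is not uniform in $\varepsilon$: the differentiated equation carries first‑order coefficients of size $|D^2u^\varepsilon|$ and a forcing term of the same size coming from $Dp$, none controlled independently of $\varepsilon$ — hence the insistence on a compactness argument that uses only $C^0$-precompactness of solutions (from the uniform Krylov--Safonov $C^\beta$ bound for uniformly parabolic equations) together with the uniform Lipschitz bound, and that copes with the loss of coefficient continuity at $\{Du^\varepsilon=0\}$ via the degenerate/non-degenerate split. Second, one must verify carefully, tracking the rescalings, that the space--time variation of the exponent $p(x,t)$ is a genuinely lower‑order perturbation at small scales, so that the final Hölder exponent and constant remain structural.
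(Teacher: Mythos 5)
Your high‑level scheme — regularize to the uniformly parabolic operator with $|Du|^2+\varepsilon^2$, prove estimates uniform in $\varepsilon$, pass to the limit by stability and comparison, and finally rescale in $(x,t)$ to reduce the general $p\in C^1$ case to the case of small $\|Dp\|_{L^\infty}$ — is exactly what the paper does, and your description of the last two steps (stability, comparison, and the scaling $\widetilde p(x,t)=p(\epsilon x,\epsilon^2 t)$) is accurate. Where you diverge, and where the gap is, is in the central step.

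You dismiss the idea of ``differentiating the regularized equation and running linear theory on $Du^\varepsilon$'' as non‑uniform in $\varepsilon$, and you replace it with a Campanato flatness improvement proved purely by compactness. But the paper does precisely the thing you dismiss, and it is uniform in $\varepsilon$: differentiating \eqref{3-1} in $x_k$ and setting $w=(Du\cdot e-l+\rho|Du|^2)^+$ with $\rho=l/4$, the bad terms of size $|D^2u^\varepsilon|$ are absorbed by the good quadratic term $-2\rho\,a^\varepsilon_{ij}u_{ki}u_{kj}$ generated by the $\rho|Du|^2$ piece, while the forcing from $Dp$ enters only through $\|Dp\|_{L^\infty}^2$ and is then beaten after the exponential change $U=\nu^{-1}(1-e^{\nu(w-\bar c t-\bar w)})$. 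The resulting $U$ is a nonnegative supersolution of a linear uniformly parabolic equation with measurable coefficients, and the De Giorgi/Krylov--Safonov measure‑to‑pointwise lemma (Lemma~\ref{lem2-3}) then yields the improvement of oscillation of $Du\cdot e$ (Lemma~\ref{lem3-2}). That is the engine of the proof, and it does not need any passage to a limit.

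The gap in your proposal is in the degenerate branch of the dichotomy. From ``$Du^\varepsilon$ is not close to any nonzero constant vector'' you infer ``$|Du^\varepsilon|$ is small and $u^\varepsilon$ is close to a constant,'' but this inference is false: the gradient may have modulus of order one everywhere while its direction oscillates, so that it is close to no single $e$ and yet $|Du^\varepsilon|$ is not small. This is exactly the configuration the paper rules out quantitatively, by running Lemma~\ref{lem3-2} for \emph{every} direction $e\in\mathbb{S}^{n-1}$ under the measure hypothesis $|\{Du\cdot e\le l\}|>\mu|Q_1|$ and then iterating (Lemma~\ref{lem3-3}); the dichotomy is ``the measure hypothesis holds for all $e$ at scale $\tau^k$'' versus ``it fails for some $e$ at some scale,'' not ``gradient close to a constant'' versus ``gradient small.'' Moreover, a pure compactness argument does not readily close the degenerate case: if you normalize the deviation $v_k=(u_k-L_k)/\sigma_k$ and let $DL_k\to0$, the coefficients $a^{\varepsilon_k}_{ij}(x,t,DL_k+\sigma_kDv_k)$ depend on $Du_k/|Du_k|$ near the set where it vanishes, and you cannot identify the limit equation for $v_\infty$; your alternative of not normalizing gives a bound on $u_k-L_k$ in absolute scale, which does not beat $\lambda^{1+\alpha}\sigma_k$ once $\sigma_k\to0$. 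This is precisely why JS17 (and the paper) avoid compactness for this branch and use the Bernstein/De Giorgi lemma, reserving the compactness‑type small‑perturbation result (Wang's theorem, Lemma~\ref{lem3-4}) for the non‑degenerate branch only, where the affine function $L$ with $\tfrac12\le|DL|\le2$ is itself a solution and the coefficients are genuinely close to constant ones.

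So: steps one, two, four and five of your plan are correct and match the paper; step three (the flatness improvement by compactness) is the wrong tool for the degenerate regime, and the specific logical step ``not close to a constant $\Rightarrow$ gradient small'' is false. Replacing it with the Bernstein computation for $w=(Du\cdot e-l+\rho|Du|^2)^+$ followed by the parabolic De Giorgi measure lemma is what makes the argument close.
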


We would like to mention that our proof is much influenced by the ideas developed in \cite{JS17}. To avoid the problem of vanishing gradient, we first approximate \eqref{1-1}  with a regularized problem \eqref{3-1} below. Then we try to derive uniform {\em a priori} estimates regarding \eqref{3-1}, so that we could pass to the limit through compactness argument eventually. Specifically, we verify that the oscillation of the spatial gradient decreases in a sequence of the shrinking parabolic cylinders. The iteration process is divided into two scenarios: either the gradient $Du$ is close to a fixed vector $e$ in a large portion of $Q_{\tau^k}$, or it does not. We then have to combine these two alternatives to get the final result. In fact, by virtue of the similar structure between \eqref{1-1} and \eqref{3-1}, we focus mainly on showing the improvement of oscillation for $|Du|$ (Lemmas \ref{lem3-1} and \ref{lem3-2}) and demonstrate the higher H\"{o}lder regularity of solutions to the original equation \eqref{1-1} via approximations. It is worth pointing out that the comparison principle and stability of viscosity solutions play an important role in the proof of Theorem \ref{thm3-6}.
To the best of our knowledge, the proof of comparison principle of \eqref{1-1} is new, which is also independent interest.


This paper is organized as follows. In Section \ref{sec-2}, we give the definition of viscosity solutions to \eqref{1-1} and state some known results that will be used later. Section \ref{sec-3} is devoted to show the H\"{o}lder gradient regularity of \eqref{1-1} under the assumption that $\|Dp\|_{L^\infty(Q_1)}$ is small first, then consummating the conclusion for all $p(x,t)\in C^1(\overline{Q_1})$. In Section \ref{sec-4}, we prove the comparison principle and stability of viscosity solutions to \eqref{1-1}, which are the indispensable ingredients for  the proof of Theorem \ref{thm3-6}.

\section{Preliminaries}
\label{sec-2}

Because Eq. \eqref{1-1} is not in divergence form, the concept of weak solutions with test functions under the integral sign is problematic. Thus,  in this section we first recall the definition of viscosity solution to \eqref{1-1}.

\begin{definition} [viscosity solution]
A lower (resp. upper) semicontinuous function u in $Q_1$ is a viscosity supersolution (resp. subsolution) to \eqref{1-1}, if for any $\varphi\in C^2(Q_1)$, $u-\varphi$ reaches the local minimum at $(x_0,t_0)\in Q_1$, then when $D\varphi(x_0,t_0)\neq0$, it holds that
$$
\varphi_t\geq(\leq,resp.)\Delta \varphi+(p(x,t)-2)\left\langle D^2\varphi\frac{D\varphi}{|D\varphi|},\frac{D\varphi}{|D\varphi|}\right\rangle
$$
at $(x_0,t_0)$; when $D\varphi(x_0,t_0)=0$, it holds that
$$
\varphi_t\geq(\leq,resp.)\Delta \varphi+(p(x,t)-2)\langle D^2\varphi q,q\rangle
$$
at $(x_0,t_0)$ for some $q\in \overline{B_1}(0)\subset\mathbb{R}^n$. A function $u$ is a viscosity solution to \eqref{1-1} if and only if it is both viscosity supper- and subsolution.
\end{definition}

Next, we state some known results about solutions of linear uniformly parabolic equations, which will be used later.
Consider the equation
\begin{equation}
\label{2-1}
u_t-a_{ij}(x,t)u_{ij}=0 \quad \text{in }  Q_1,
\end{equation}
where the coefficient $a_{ij}$ is uniformly parabolic, i.e., there exist two constants $0<\lambda\leq\Lambda<\infty$ such that
\begin{equation}
\label{2-2}
\lambda I\leq a_{ij}(x,t)\leq\Lambda I \quad \text{for all }  (x,t)\in Q_1.
\end{equation}

We begin with the following two lemmas (see \cite{JS17}).
\begin{lemma}
\label{lem2-1}
Let $u\in C(\overline{Q_1})$ be a solution to \eqref{2-1} satisfying \eqref{2-2} and $A$ be a positive constant. If
$$
{\rm osc}_{B_1}u(\cdot,t)\leq A
$$
for any $t\in[-1,0]$, then we have
$$
{\rm osc}_{Q_1}u(x,t)\leq CA,
$$
where $C>0$ depends only on $n,\Lambda$.
\end{lemma}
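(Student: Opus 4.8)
The plan is to reduce the parabolic oscillation estimate to the known spatial oscillation hypothesis by exploiting the interior regularity theory (Krylov--Safonov) for the uniformly parabolic equation \eqref{2-1}. The key point is that bounding the spatial oscillation uniformly in $t$ already forces H\"older continuity in both variables jointly, because the equation links the time derivative to the spatial second derivatives whose size is controlled by the spatial oscillation via interior estimates.

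First I would fix a point $(x_0,t_0)\in Q_1$ and, by the De Giorgi--Nash--Moser / Krylov--Safonov interior estimate applied on a suitable subcylinder, obtain that $u$ is H\"older continuous in $Q_{1/2}$ with a modulus controlled by $\|u\|_{L^\infty}$; since we only want to bound the oscillation over $Q_1$ in terms of $A$, a cleaner route is the following barrier argument. After a vertical translation we may assume $\inf_{B_1\times\{t_0\}}u(\cdot,t_0)=0$, so that $0\le u(\cdot,t_0)\le A$ on $B_1$ by hypothesis. I would then freeze time at $t=t_0$ and run the equation forward and backward in time on a smaller spatial ball $B_{3/4}$, comparing $u$ with barriers of the form $w^{\pm}(x,t)=\pm C A\bigl(|x-x_0|^2 + \Lambda(t-t_0)\bigr) + (\text{boundary term})$, using uniform parabolicity \eqref{2-2} to check that $w^{\pm}$ are super-/subsolutions after adjusting $C=C(n,\Lambda)$. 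On the parabolic boundary of $B_{3/4}\times(t_0-c,t_0]$ (or the forward cylinder), the bound $0\le u\le A$ controls $u$ against the barriers, so the comparison principle for \eqref{2-1} yields $\mathrm{osc}\,u\le CA$ on the interior of that cylinder. Iterating this over a finite chain of overlapping time slices covering $[-1,0]$ (the number of steps depends only on $n,\Lambda$ through the height $c$ of each cylinder) gives $\mathrm{osc}_{Q_1}u\le CA$.

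Alternatively, and perhaps more in the spirit of \cite{JS17}, I would combine the spatial oscillation bound with the interior Krylov--Safonov H\"older estimate in a rescaled form: the function $v=u/A$ satisfies the same equation with $\mathrm{osc}_{B_1}v(\cdot,t)\le 1$, and one shows by a standard compactness-and-contradiction argument (or directly from the weak Harnack inequality applied to $v-\inf$ and $\sup-v$ on space-time cubes) that $\mathrm{osc}$ over a space-time cylinder is comparable to the supremum of spatial oscillations. Then rescaling back produces the constant $C(n,\Lambda)$.

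The main obstacle is making the comparison step uniform: the barriers must be chosen so that the resulting constant $C$ depends only on $n$ and $\Lambda$ and not on $\lambda$ or on $u$ itself, which requires care in the barrier construction (the quadratic-in-$x$ barrier works precisely because $a_{ij}\le \Lambda I$ controls $a_{ij}\partial_{ij}|x|^2 = 2\,\mathrm{tr}(a_{ij})\le 2n\Lambda$, independent of the ellipticity lower bound). A secondary technical point is handling the parabolic (one-sided in time) geometry correctly, since one can only propagate information forward in $t$; this is why a finite chain of overlapping slices covering $(-1,0]$ is needed rather than a single comparison.
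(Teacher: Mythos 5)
The paper itself gives no proof of this lemma (it is quoted from \cite{JS17}), so your attempt has to stand on its own, and it has a genuine gap at exactly the point where the lemma is nontrivial: the lateral boundary of your comparison cylinder. After normalizing $\inf_{B_1}u(\cdot,t_0)=0$, the hypothesis gives $0\le u(\cdot,t_0)\le A$ \emph{only on the single slice} $t=t_0$. On the lateral boundary $\partial B_{3/4}\times(t_0,t_0+c]$ all you know is $u(x,t)\le \inf_{B_1}u(\cdot,t)+A$, and the drift in time of $\inf_{B_1}u(\cdot,t)$ is precisely what the lemma is supposed to control; so your claim that ``the bound $0\le u\le A$ controls $u$ against the barriers on the parabolic boundary'' is not available, and the comparison principle cannot be invoked as described -- the argument is circular. (The quadratic barrier $\pm CA\bigl(|x-x_0|^2+n\Lambda(t-t_0)\bigr)$ is indeed a super/subsolution with constant depending only on $n,\Lambda$; the problem is not the barrier but the boundary data.) In addition, the ``backward in time'' half of the comparison is not legitimate for a parabolic equation: one can only compare forward from a given slice, and information about earlier times must be obtained by taking the earlier slice as the base point of the chain.

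What is missing, and what the proof in \cite{JS17} supplies, is twofold. First, reduce the space--time oscillation to the time oscillation at one well-interior point, say $x_0=0$: by the slice-wise hypothesis, $|u(x,t)-u(y,s)|\le 2A+|u(0,t)-u(0,s)|$, which also avoids the degeneration of any barrier argument near $\partial B_1$. Second, handle the lateral boundary of the cylinder $B_\rho(x_0)\times(t_0,t_0+c]$ by writing $u(x,t)\le u(x_0,t)+A$ there, which makes the estimate self-referential, and close it by a continuity/bootstrap argument on time intervals of length of order $\rho^2/(n\Lambda)$, then chain finitely many such intervals (their number depends only on $n,\Lambda$); this is where the correct dependence $C=C(n,\Lambda)$ comes from. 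Your alternative route does not repair the gap: Krylov--Safonov, the weak Harnack inequality, or a compactness argument all carry constants depending on the ellipticity ratio, hence on $\lambda$, contradicting the stated dependence on $n,\Lambda$ only, and the assertion that ``the oscillation over a space--time cylinder is comparable to the supremum of the spatial oscillations'' is the content of the lemma itself, not a standard fact that can be quoted.
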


\begin{lemma}
\label{lem2-2}
Let $\eta,u$ be a positive constant and a smooth solution to \eqref{2-1} satisfying \eqref{2-2} respectively. Suppose $|Du|\leq 1$ in $Q_1$ and
$$
|\{(x,t)\in Q_1:|Du-e|>\varepsilon_0\}|\leq\varepsilon_1
$$
for some $e\in\mathbb{S}^{n-1}$ and two positive constants $\varepsilon_0,\varepsilon_1$. Then there is a constant $a\in\mathbb{R}$ such that
$$
|u(x,t)-a-e\cdot x|\leq \eta
$$
for any $(x,t)\in Q_{1/2}$, provided that $\varepsilon_0,\varepsilon_1$ are small enough. Here $\varepsilon_0,\varepsilon_1$ depend on $n,\lambda,\Lambda$ and $\eta$.
\end{lemma}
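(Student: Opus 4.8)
\textbf{Proof proposal for Lemma \ref{lem2-2}.}

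The plan is to argue by compactness/contradiction, exploiting the fact that a solution of a linear uniformly parabolic equation whose gradient is essentially a fixed unit vector $e$ must in fact be close to the affine function $a + e\cdot x$. Suppose the conclusion fails. Then there exist $\eta>0$, sequences $\varepsilon_0^k, \varepsilon_1^k\to 0$, unit vectors $e_k\in\mathbb{S}^{n-1}$, and smooth solutions $u_k$ of $\partial_t u_k - a^k_{ij}u_{k,ij}=0$ in $Q_1$ with coefficients satisfying the ellipticity bounds \eqref{2-2} uniformly (same $\lambda,\Lambda$), such that $|Du_k|\le 1$ in $Q_1$, $|\{(x,t)\in Q_1 : |Du_k - e_k|>\varepsilon_0^k\}|\le \varepsilon_1^k$, yet for every $a\in\mathbb{R}$ there is a point in $Q_{1/2}$ where $|u_k - a - e_k\cdot x|>\eta$. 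After subtracting the mean of $u_k(\cdot,0)$ over $B_1$ (which changes neither the equation nor the gradient), we may assume $\fint_{B_1}u_k(x,0)\,dx = 0$; passing to a further subsequence we may assume $e_k\to e\in\mathbb{S}^{n-1}$.

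Next I would establish compactness of $\{u_k\}$. Since $|Du_k|\le 1$ in $Q_1$, the functions $u_k(\cdot,t)$ are uniformly Lipschitz in space with oscillation over $B_1$ bounded by the diameter, hence at most a fixed constant $A$; Lemma \ref{lem2-1} then gives $\mathrm{osc}_{Q_1}u_k\le CA$, and combined with the normalization this yields a uniform $L^\infty$ bound on $u_k$ in $Q_1$. Interior parabolic estimates for linear uniformly parabolic equations (Krylov--Safonov $C^\beta$ estimates, or the interior gradient and $C^{1,\beta}$ estimates available because $Du_k$ is bounded) give uniform interior Hölder estimates, so along a subsequence $u_k\to u_\infty$ locally uniformly in $Q_1$, with $u_\infty$ Lipschitz in $x$ (Lipschitz constant $\le 1$) and satisfying $\fint_{B_1}u_\infty(x,0)\,dx=0$. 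The measure condition forces $Du_k\to e$ in $L^1_{\mathrm{loc}}$: indeed on the good set $|Du_k - e_k|\le\varepsilon_0^k\to 0$ and $e_k\to e$, while the bad set has measure $\le\varepsilon_1^k\to 0$ and $|Du_k|\le 1$ there, so $\int_{Q_{3/4}}|Du_k - e|\to 0$. Passing to the limit in the distributional sense, $Du_\infty = e$ a.e.\ in $Q_{3/4}$, which means $u_\infty(x,t) = e\cdot x + h(t)$ for some function $h$ of $t$ alone on $B_{3/4}\times(-(3/4)^2,0]$.

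It then remains to pin down $h$. Since $u_\infty$ is a limit of solutions with uniformly elliptic coefficients and locally uniform convergence (together with the available interior second-derivative bounds, which are uniform because $|Du_k|\le1$ lets us differentiate the equation), $u_\infty$ is itself a (viscosity, hence classical by regularity) solution of a limiting uniformly parabolic equation; feeding $u_\infty = e\cdot x + h(t)$ into it gives $h'(t) = a^\infty_{ij}\cdot 0 = 0$, so $h\equiv a$ is constant. (Alternatively, and more elementarily: for each fixed small $\delta$, $\mathrm{osc}_{B_\delta}u_\infty(\cdot,t)\le 2\delta$ for all $t$; applying Lemma \ref{lem2-1} rescaled to the cylinder $Q_\delta$ and using $Du_\infty\equiv e$ shows $u_\infty(x,t)-e\cdot x$ has oscillation $o(1)$ in $t$ as $\delta\to0$, forcing it to be constant in $t$.) Hence $u_\infty \equiv a + e\cdot x$ on $Q_{1/2}$ for some $a\in\mathbb{R}$, and by locally uniform convergence $|u_k - a - e_k\cdot x|<\eta$ on $Q_{1/2}$ for $k$ large, contradicting the assumption. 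The main obstacle is the compactness step: one must check that the normalization plus the bound $|Du_k|\le1$ genuinely yields a uniform $L^\infty$ bound (this is where Lemma \ref{lem2-1} is used) and that the interior parabolic estimates applied are those valid for merely \emph{measurable} uniformly parabolic coefficients, so no regularity of $a_{ij}$ is needed; the identification of the time-dependence of the limit is the only other point requiring care.
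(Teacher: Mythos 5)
Your overall compactness scheme is viable, and it is a genuinely different route from the one the paper relies on (the paper does not prove Lemma \ref{lem2-2} at all; it quotes it from \cite{JS17}, where the argument is direct, via Fubini on the bad set plus the oscillation Lemma \ref{lem2-1}, with no limit procedure). Most of your steps are sound for merely measurable coefficients: the normalization, the uniform $L^\infty$ bound via $|Du_k|\le 1$ and Lemma \ref{lem2-1}, Krylov--Safonov equicontinuity, the $L^1$ convergence $Du_k\to e$ forced by the measure hypothesis, and hence $u_\infty(x,t)=e\cdot x+h(t)$. The genuine gap is the step where you conclude that $h$ is constant.

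Both justifications you offer for that step fail as written. First, under \eqref{2-2} the coefficients $a^k_{ij}$ are only bounded and measurable, so you cannot ``differentiate the equation'' to get uniform interior second-derivative bounds (that would require smoothness of $a^k_{ij}$ with uniform norms, which is not assumed and which you yourself disclaim at the end), and there is no ``limiting uniformly parabolic equation'': the $a^k_{ij}$ need not converge in any sense that allows passing to the limit in $\partial_t u_k=a^k_{ij}\partial_{ij}u_k$, so the identity $h'(t)=a^\infty_{ij}\cdot 0$ has no meaning. Second, the ``more elementary'' alternative does not give constancy: Lemma \ref{lem2-1} rescaled to $B_\delta(x_0)\times(t_0-\delta^2,t_0]$, applied to $u_k-e_k\cdot x$ (whose slice oscillation is only bounded by $C\delta$, since $|Du_k-e_k|$ is not small on the bad set), controls the time oscillation only over intervals of length $\delta^2$; letting $\delta\to0$ yields $|h(t)-h(s)|\le C|t-s|^{1/2}$, i.e.\ a modulus of continuity for $h$, not $h\equiv\mathrm{const}$. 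The missing ingredient is the standard one: by stability under locally uniform convergence, $u_\infty$ satisfies the Pucci extremal inequalities $\mathcal{M}^-_{\lambda,\Lambda}(D^2u_\infty)\le \partial_t u_\infty\le \mathcal{M}^+_{\lambda,\Lambda}(D^2u_\infty)$ in the viscosity sense; testing $u_\infty=e\cdot x+h(t)$ with test functions depending on $t$ alone gives $h'=0$ in the viscosity sense, hence $h$ constant (equivalently, the barrier proof of Lemma \ref{lem2-1} applies to this extremal class, and applied to $u_\infty-e\cdot x$, whose spatial slice oscillation is zero, it gives zero total oscillation). With that replacement the contradiction step at the end goes through.
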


Subsequently, we present an important conclusion about improvement of oscillation for solution to \eqref{2-1}.

\begin{lemma} [\label{lem2-3}\cite{JS17}]
Assume $u\in C(Q_1)$ is a nonnegative supersolution to \eqref{2-1} satisfying \eqref{2-2}. For any $0<\mu<1$, there is $\tau\in (0,1)$ depending only on $n,\mu$ and $\gamma>0$ depending on $n,\mu,\lambda,\Lambda$ such that if
$$
|\{(x,t)\in Q_1:u\geq1\}|>\mu|Q_1|,
$$
then it holds that
$$
u\geq\gamma \quad \text{in } Q_\tau.
$$
\end{lemma}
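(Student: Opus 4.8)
The plan is to derive Lemma~\ref{lem2-3} from the parabolic weak Harnack inequality of Krylov--Safonov type for nonnegative supersolutions of uniformly parabolic equations: there exist $p_0=p_0(n,\lambda,\Lambda)\in(0,1)$ and $C_0=C_0(n,\lambda,\Lambda)>0$ so that every nonnegative supersolution $v$ of \eqref{2-1}--\eqref{2-2} in a cylinder $\mathcal{C}:=B_{4r}(z)\times(s-16r^2,s]$ satisfies
$$
\left(\frac{1}{|\mathcal{C}^-|}\int_{\mathcal{C}^-}v^{p_0}\,dx\,dt\right)^{1/p_0}\;\le\; C_0\,\inf_{\mathcal{C}^+}v,
\qquad \mathcal{C}^-:=B_r(z)\times(s-16r^2,s-15r^2),\quad \mathcal{C}^+:=B_r(z)\times(s-r^2,s].
$$
Only the schematic geometry matters below. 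With this at hand, the task is to turn the measure hypothesis on $\{u\ge1\}$ into an $L^{p_0}$--lower bound on a sub-cylinder sitting strictly below the top time, and then to transport the resulting pointwise bound up to $Q_\tau$.

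First I would discard the top slab $B_1\times(-\tfrac{\mu}{2},0]$, of measure $\tfrac{\mu}{2}|B_1|$. Since $|\{u\ge1\}\cap Q_1|>\mu|Q_1|=\mu|B_1|$, the set $\{u\ge1\}$ meets $\widetilde Q:=B_1\times(-1,-\tfrac{\mu}{2}]$ in measure at least $\tfrac{\mu}{2}|B_1|$, i.e.\ in a fraction $\ge\tfrac{\mu}{2-\mu}$ of $\widetilde Q$. I then fix a small radius $r_0=r_0(n,\mu)$ and a compact subdomain $\widetilde Q'$ of $\widetilde Q$, obtained by deleting a parabolic boundary layer of width $\sim r_0$, chosen so that $\widetilde Q'$ still carries at least half of this mass and that every parabolic cylinder of radius $r_0$ based in $\widetilde Q'$ admits an enlargement of the form $\mathcal{C}$ above contained in $Q_1$. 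Covering $\widetilde Q'$ by a bounded number $N=N(n,\mu)$ of such cylinders and using the pigeonhole principle, one of them, $Q^{(0)}$, carries $\{u\ge1\}$ in relative measure at least some $\kappa=\kappa(n,\mu)>0$. Since $u\ge0$ and $u^{p_0}\ge\mathbf{1}_{\{u\ge1\}}$, the average of $u^{p_0}$ over $Q^{(0)}$ is $\ge\kappa$, so the weak Harnack inequality on the enlargement of $Q^{(0)}$ yields a sub-cylinder $\widehat Q^{(0)}\subset Q_1$, whose top time lies about $r_0^2$ above that of $Q^{(0)}$, on which $u\ge\gamma_1$ for some $\gamma_1=\gamma_1(n,\mu,\lambda,\Lambda)>0$.

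Finally I would propagate this bound forward in time. Once $u\ge\gamma$ on a solid parabolic cylinder, the weak Harnack inequality applied there (with the trivial estimate that the $L^{p_0}$--average of $u$ on it is $\ge\gamma$) gives $u\ge c_0\gamma$ on a cylinder whose top time is about $r_0^2$ later and whose center is displaced inward by at most about $r_0$, where $c_0=c_0(n,\lambda,\Lambda)\in(0,1)$. Since $Q^{(0)}\subset B_1$ lies below time $-\tfrac{\mu}{2}$, after $M=M(n,\mu)$ such steps the running cylinder contains $Q_\tau$ for a suitable $\tau=\tau(n,\mu)\in(0,1)$; chaining the inequalities gives $u\ge\gamma:=\gamma_1c_0^{\,M}$ in $Q_\tau$. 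All the geometric choices ($\tfrac{\mu}{2}$, $r_0$, $N$, $M$, $\tau$) depend only on $n$ and $\mu$, whereas the multiplicative constants $C_0,c_0$ carry the dependence on $\lambda,\Lambda$, which is exactly the dependence claimed.

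The delicate point, I expect, is this last transport step together with the calibration of $r_0$ and $\widetilde Q'$: one must keep each ``source'' cylinder in the past of its ``target'', keep every enlargement inside $Q_1$ and clear of the lateral boundary $\partial B_1$ (which is precisely what forces $r_0$ small and requires deleting the boundary layer), and still reach a cylinder covering $Q_\tau$ in a number of steps bounded in terms of $n$ and $\mu$ alone. If one instead quotes the weak Harnack inequality in its stronger, translation- and scale-invariant ``measure-to-point'' form, already available in the Krylov--Safonov literature and used in \cite{JS17}, this bookkeeping is subsumed and the argument reduces essentially to the first two paragraphs.
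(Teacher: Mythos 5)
The paper itself offers no proof here: the lemma is quoted verbatim from \cite{JS17}, which in turn derives it directly from the parabolic weak Harnack (measure-to-point) inequality for nonnegative supersolutions of uniformly parabolic equations. Your argument — converting the measure hypothesis into an $L^{p_0}$ lower bound on a sub-cylinder via a pigeonhole covering, then propagating the resulting pointwise bound forward in time by chaining the weak Harnack inequality, with $\tau$ controlled by the purely geometric constants (depending only on $n,\mu$) and $\gamma$ picking up the extra $\lambda,\Lambda$-dependence through the Harnack constants $C_0,c_0$ — is exactly that route, and the dependence bookkeeping matches the statement.
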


We end this section by the following boundary estimates of solutions to \eqref{2-1} utilized in the proof of Theorem \ref{thm3-6}.

\begin{lemma}   [\label{lem2-4}\cite{JS17}]
Suppose that $u\in C(\overline{Q_1})$ is a solution to \eqref{2-1} satisfying \eqref{2-2} and that $\rho$ is a modulus of continuity of boundary value $\varphi:=u\mid_{\partial_p Q_1}$. Then there is another modulus of continuity $\rho^*$ that depends on $n,\lambda,\Lambda,\rho,\|\varphi\|_{L^\infty(\partial_p Q_1)}$ such that
$$
|u(x,t)-u(y,s)|\leq \rho^*(|x-y|\vee \sqrt{|t-s|})
$$
for any $(x,t),(y,s)\in \overline{Q_1}$. Here $a\vee b$ denotes $\max\{a,b\}$.
\end{lemma}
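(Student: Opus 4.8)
The plan is to derive the global modulus $\rho^*$ by combining a barrier argument at the parabolic boundary $\partial_p Q_1=(\overline{B_1}\times\{-1\})\cup(\partial B_1\times[-1,0])$ with the standard interior H\"older estimate for \eqref{2-1}. Throughout write $d_p((x,t),(y,s)):=|x-y|\vee\sqrt{|t-s|}$ and $M_0:=\|\varphi\|_{L^\infty(\partial_p Q_1)}$, and set $Q_\sigma(z):=B_\sigma(x)\times(t-\sigma^2,t]$ for $z=(x,t)$. Two inputs are used freely: the classical maximum principle for \eqref{2-1} on $\overline{Q_1}$ with data on $\partial_p Q_1$, which in particular gives $\|u\|_{L^\infty(\overline{Q_1})}\le M_0$; and the Krylov--Safonov interior estimate, namely the existence of $\alpha_0\in(0,1)$ and $C_0$, depending only on $n,\lambda,\Lambda$, with $\operatorname{osc}_{Q_r(z)}u\le C_0(r/R)^{\alpha_0}\operatorname{osc}_{Q_R(z)}u$ whenever $Q_{2r}(z)\subset Q_R(z)\subset Q_1$.

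First I would treat the initial face. For $z_0=(x_0,-1)$ and $\varepsilon>0$ choose $\delta=\delta(\varepsilon)>0$, coming from $\rho$ only, so that $|\varphi(z')-\varphi(z_0)|\le\varepsilon$ whenever $z'\in\partial_p Q_1$ and $d_p(z',z_0)\le\delta$, and compare $\pm(u-\varphi(z_0))$ with the explicit supersolution $w(x,t)=\varepsilon+M|x-x_0|^2+N(t+1)$, where $M=2M_0/\delta^2$ and $N=\max\{2Mn\Lambda,M\}$ are chosen so that $w_t-a_{ij}w_{ij}=N-2M\operatorname{tr}(a_{ij})\ge0$ and $w\ge|\varphi-\varphi(z_0)|$ on $\partial_p Q_1$; optimising in $\varepsilon$ yields a modulus $\omega_{\mathrm b}$, depending only on $n,\Lambda,\rho,M_0$, with $|u(z)-\varphi(z_0)|\le\omega_{\mathrm b}(d_p(z,z_0))$ on $\overline{Q_1}$. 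Next I would treat a lateral point $z_0=(x_0,t_0)$, $x_0\in\partial B_1$. Using the exterior ball condition of $B_1$ at $x_0$ one has a time-independent supersolution $g$ of \eqref{2-1} of the form $g(x)=c_1-c_2|x-x_\ast|^{-\beta}$ (with $x_\ast$ the centre of the exterior ball and $\beta=\beta(n,\lambda,\Lambda)$ large) that vanishes at $x_0$, stays $\ge0$ on $\overline{B_1}$, and is bounded below by a positive constant away from $x_0$. Applying this spatial barrier on the part of $\partial_p(Q_{2r}(z_0)\cap\overline{Q_1})$ lying on $\partial B_1$ (where $|\varphi-\varphi(z_0)|\le\rho(Cr)$), and the interior estimate on the part lying inside $Q_1$, I expect to get, for small $r$,
$$
\operatorname{osc}_{Q_{r}(z_0)\cap\overline{Q_1}}u\ \le\ \theta\,\operatorname{osc}_{Q_{2r}(z_0)\cap\overline{Q_1}}u\ +\ C_1\,\rho(Cr),\qquad \theta=\theta(n,\lambda,\Lambda)\in(0,1);
$$
iterating over dyadic scales and summing the geometric series produces a modulus $\omega_{\mathrm l}$, depending only on $n,\lambda,\Lambda,\rho,M_0$, with $\operatorname{osc}_{Q_r(z_0)\cap\overline{Q_1}}u\le\omega_{\mathrm l}(r)$ for all lateral $z_0$ and small $r$ (the space-time edge $\partial B_1\times\{-1\}$ is handled by the same exterior-ball barrier).

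It remains to splice the boundary and interior moduli. Given $z_1,z_2\in\overline{Q_1}$ with $d:=d_p(z_1,z_2)$ small, if $d_p(z_1,\partial_p Q_1)\ge 8\sqrt d$ then $z_1,z_2\in Q_{2d}(\hat z)$ with $\hat z=(x_1,t_1\vee t_2)$, and $Q_{2d}(\hat z)$ sits well inside $Q_1$ at distance comparable to $\sqrt d\gg d$, so the interior estimate gives $|u(z_1)-u(z_2)|\le\operatorname{osc}_{Q_{2d}(\hat z)}u\le CM_0\,d^{\alpha_0/2}$; if instead $d_p(z_1,\partial_p Q_1)<8\sqrt d$, pick $z_0\in\partial_p Q_1$ with $d_p(z_1,z_0)<8\sqrt d$, note $d_p(z_2,z_0)\le 9\sqrt d$, and use the previous paragraph to bound $|u(z_1)-u(z_2)|\le\operatorname{osc}_{Q_{9\sqrt d}(z_0)\cap\overline{Q_1}}u\le(\omega_{\mathrm b}+\omega_{\mathrm l})(9\sqrt d)$. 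In either case $|u(z_1)-u(z_2)|\le\rho^*(d)$ with $\rho^*(s):=CM_0\,s^{\alpha_0/2}+(\omega_{\mathrm b}+\omega_{\mathrm l})(9\sqrt s)$ for small $s$, extended to a nondecreasing modulus equal to $2M_0$ for larger $s$; since every constant above depends only on $n,\lambda,\Lambda,\rho$ and $\|\varphi\|_{L^\infty(\partial_p Q_1)}$, this is exactly the claim.

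The hard part is the lateral-face step: constructing the exterior-ball barrier and running the oscillation-reduction iteration \emph{uniformly} over all lateral points (and the edge). The point is that, unlike at the initial face, a single global barrier cannot absorb the full oscillation of $\varphi$ near a lateral point, because the data may swing by order $M_0$ between $(x_0,t_0)$ and far-earlier times $(x_0,t')$ on $\partial B_1$; this forces one to use the iterated local decay rather than a one-shot comparison. By contrast, the initial-face estimate is a direct comparison with an explicit polynomial barrier, and the splicing is routine bookkeeping.
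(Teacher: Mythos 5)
The paper gives no proof of this lemma at all---it is quoted verbatim from \cite{JS17}---so your argument has to stand on its own. Its architecture is the standard one behind the cited result and is sound in outline: maximum principle to get $\|u\|_{L^\infty}\le M_0$, an explicit quadratic barrier at the initial face, an oscillation-decay iteration at the lateral boundary, interior Krylov--Safonov decay, and the splicing through the dichotomy on $d_p(z_1,\partial_pQ_1)$ versus $8\sqrt d$. The initial-face comparison and the final splicing are correct as written.

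The gap is precisely where you yourself located the difficulty, the lateral-face decay, and the mechanism you propose does not deliver it. To get $\operatorname{osc}_{Q_r(z_0)\cap\overline{Q_1}}u\le\theta\,\operatorname{osc}_{Q_{2r}(z_0)\cap\overline{Q_1}}u+C_1\rho(Cr)$ by comparison you need a supersolution dominating the full free-boundary oscillation on all of $\partial_p\bigl(Q_{2r}(z_0)\cap Q_1\bigr)\setminus\partial_pQ_1$, which includes the bottom face $(B_{2r}(x_0)\cap B_1)\times\{t_0-4r^2\}$ and the part of $\partial B_{2r}(x_0)\cap B_1$ near its intersection with $\partial B_1$. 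Your barrier $g(x)=c_1-c_2|x-x_\ast|^{-\beta}$ is time-independent and vanishes at $x_0$, so it is small at bottom-face points with $x$ close to $x_0$ (where $u$ may be as large as $\sup_{Q_{2r}\cap Q_1}u$), and it is only of size $O(r^2)$ near the corner where $\partial B_{2r}(x_0)$ meets $\partial B_1$; normalizing it to dominate there destroys the required smallness on $Q_r(z_0)$, and ``the interior estimate on the part lying inside $Q_1$'' cannot substitute, since Krylov--Safonov by itself produces no factor $\theta<1$ at the boundary. Two standard repairs: either build a genuinely time-dependent (caloric-type) barrier, so that the influence of the bottom data decays near $\partial B_1$ as time elapses; or, cleaner and consistent with the toolkit already quoted in Section \ref{sec-2}, set $S:=\sup_{\partial_pQ_1\cap\overline{Q_{2r}(z_0)}}\varphi$, extend $(u-S)^+$ by zero across $\partial B_1$ (a subsolution of a uniformly parabolic equation in the whole cylinder $Q_{2r}(z_0)$, with coefficients extended by $\delta_{ij}$), observe that it vanishes on $(B_{2r}(x_0)\setminus B_1)\times(t_0-4r^2,t_0]$, a universal fraction of $|Q_{2r}(z_0)|$, and apply the measure-to-pointwise estimate of Lemma \ref{lem2-3} to $A-(u-S)^+$ with $A:=\sup_{Q_{2r}(z_0)\cap Q_1}(u-S)^+$; this gives $u\le S+(1-\gamma)A$ on $Q_{2\tau r}(z_0)\cap Q_1$, and together with the symmetric lower bound it yields the decay with universal $\theta$, after which the iteration and the rest of your argument go through.
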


\section{H\"{o}lder regularity of spatial gradients}
\label{sec-3}

To avoid the lack of smoothness of viscosity solutions to \eqref{1-1}, we first regularize  the Eq. \eqref{1-1} to
\begin{equation}
\label{3-1}
u_t=\left(\delta_{ij}+(p(x,t)-2)\frac{u_i u_j}{|Du|^2+\varepsilon^2}\right)u_{ij}
\end{equation}
with $\varepsilon>0$. For later convenience, we denote
\begin{equation*}
a^\varepsilon_{ij}:=a^\varepsilon_{ij}(x,t,Du)=\delta_{ij}+(p(x,t)-2)\frac{u_i u_j}{|Du|^2+\varepsilon^2}
\end{equation*}
with $u_i$ being the $i$-th component of $Du$.

Now we present the interior Lipschtiz regularity of solutions to \eqref{3-1}.

\begin{lemma}
\label{lem3-1}
Let $u$ be a smooth solution to \eqref{3-1} in $Q_4$ with $\varepsilon\in(0,1)$. Then there is a constant $C>0$, which depends on $n,p_-,p_+$ and $\|u\|_{L^\infty(Q_4)}$, such that
$$
|u(x,t)-u(y,t)|\leq C|x-y|
$$
for each $(x,t),(y,t)\in Q_3$ and $|x-y|<1$.
\end{lemma}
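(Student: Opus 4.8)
The plan is to prove the estimate by the Bernstein method applied to the \emph{regularized} equation \eqref{3-1}, the point being that the resulting gradient bound will be uniform in $\varepsilon$. Since $u$ is smooth, the coefficient matrix $a^\varepsilon_{ij}=a^\varepsilon_{ij}(x,t,Du)$ is uniformly parabolic with ellipticity constants $\lambda_0:=\min\{1,p_--1\}$ and $\Lambda_0:=\max\{1,p_+-1\}$ depending only on $p_-,p_+$. Set $v:=|Du|^2$. Differentiating \eqref{3-1} in $x_k$, multiplying by $2u_k$ and summing over $k$ yields the pointwise identity
$$
v_t-a^\varepsilon_{ij}v_{ij}=-2a^\varepsilon_{ij}u_{ki}u_{kj}+2u_k\big(\partial_{x_k}a^\varepsilon_{ij}\big)u_{ij},
$$
where $\partial_{x_k}a^\varepsilon_{ij}$ splits into a term carrying $\partial_{x_k}p$ and a term coming from the $Du$-dependence of $a^\varepsilon_{ij}$. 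I would expand the right-hand side explicitly; using $a^\varepsilon_{ij}u_{ki}u_{kj}\ge\lambda_0|D^2u|^2$, the identity $D^2u\,Du=\tfrac12 Dv$, and the Cauchy--Schwarz bounds $|D^2u\,Du|\le|D^2u||Du|$ and $|(Du)^{\!\top}D^2u\,Du|\le|Du||D^2u\,Du|$, this reduces to an inequality of the schematic form
$$
v_t-a^\varepsilon_{ij}v_{ij}\le-2\lambda_0|D^2u|^2+C(p_+)\,\frac{|Dv|^2}{|Du|^2+\varepsilon^2}+\|Dp\|_{L^\infty}\,|Dv| .
$$

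The only delicate point is that the middle term is controlled pointwise merely by $\frac{|Dv|^2}{|Du|^2+\varepsilon^2}=\frac{4|D^2u\,Du|^2}{|Du|^2+\varepsilon^2}\le4|D^2u|^2$, so a naive comparison with the single good term $-2\lambda_0|D^2u|^2$ is insufficient once $p_+$ is large. The remedy is to invoke the differential inequality for $v$ only at the maximum point of a localized auxiliary function, where the first-order optimality condition furnishes genuine control on $Dv$. Concretely, fix a cutoff $\zeta\in C^\infty_c(Q_4)$ with $\zeta\equiv1$ on $Q_3$ and put $W:=\zeta^2 v+\Lambda|u|^2$ for a constant $\Lambda=\Lambda(n,p_-,p_+)$ to be chosen, and let $(x^*,t^*)$ be a maximum point of $W$ over $\overline{Q_4}$. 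If $(x^*,t^*)$ lies on the parabolic boundary, or if $Du(x^*,t^*)=0$, then $W\le\Lambda\|u\|_{L^\infty(Q_4)}^2$ everywhere and we are done; otherwise $DW=0$, $D^2W\le0$ and $W_t\ge0$ at $(x^*,t^*)$, so that $0\le W_t-a^\varepsilon_{ij}W_{ij}$ there. From $DW=0$ one reads off $\zeta^2 Dv=-v\,D(\zeta^2)-2\Lambda u\,Du$, whence at $(x^*,t^*)$
$$
\frac{|Dv|^2}{|Du|^2+\varepsilon^2}\le\frac{|Dv|^2}{|Du|^2}\le C\Big(\frac{|D\zeta|^2}{\zeta^2}\,v+\frac{\Lambda^2|u|^2}{\zeta^4}\Big),
$$
so the problematic term is now dominated by quantities of lower order in $D^2u$ and no longer competes with $-2\lambda_0|D^2u|^2$. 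Feeding this back, together with the elementary identity $(u^2)_t-a^\varepsilon_{ij}(u^2)_{ij}=-2a^\varepsilon_{ij}u_iu_j\le-2\lambda_0|Du|^2$, the standard cutoff error terms, and Young's inequality to absorb the $\|Dp\|_{L^\infty}|Dv|$ contribution, a large enough choice of $\Lambda$ turns $0\le W_t-a^\varepsilon_{ij}W_{ij}$ at $(x^*,t^*)$ into $W(x^*,t^*)\le C(\|u\|_{L^\infty(Q_4)}^2+1)$ with $C$ independent of $\varepsilon$. Since $\zeta\equiv1$ on $Q_3$, this gives $\sup_{Q_3}|Du|^2\le C(\|u\|_{L^\infty(Q_4)}^2+1)$, and integrating $|Du|$ along the segment joining $y$ to $x$ yields $|u(x,t)-u(y,t)|\le C|x-y|$ for $(x,t),(y,t)\in Q_3$.

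The main obstacle is exactly the one flagged above: playing the term generated by the $Du$-dependence of the coefficients off against the single negative term $-2\lambda_0|D^2u|^2$, which is why the estimate must be extracted at the maximum of the auxiliary function (where $Dv$ is controlled) rather than pointwise, and why the correction $\Lambda|u|^2$ is needed. The remaining work — keeping track of the cutoff error terms and carrying out the localization to $Q_3$ (for instance by iterating the estimate over a family of intermediate cylinders so as to obtain a clean $\varepsilon$-independent constant) — is routine. The quantities involving $\|Dp\|_{L^\infty}$ appear only at lower order, so they are harmless; in particular, under the smallness hypothesis on $\|Dp\|_{L^\infty}$ in force in Section \ref{sec-3} they contribute nothing new to the constant.
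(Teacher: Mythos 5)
Your proposal is correct in substance, but it takes a genuinely different route from the paper. The paper disposes of this lemma in one line: it notes that the matrix $I+(p(x,t)-2)\frac{q\otimes q}{|q|^2+\varepsilon^2}$ is uniformly elliptic with constants $\min\{1,p_--1\}$ and $\max\{1,p_+-1\}$, and then invokes the Lipschitz estimate of Section 2 of \cite{IJS19}, which is obtained by the Ishii--Lions doubling-of-variables technique at the viscosity level. You instead run a self-contained Bernstein argument on the smooth solutions of \eqref{3-1}: the identity you use for $v=|Du|^2$ is exactly the one the paper derives later in the proof of Lemma \ref{lem3-2}; your bound of the term coming from the $Du$-dependence of $a^\varepsilon_{ij}$ by $C(p_-,p_+)\,|Dv|^2/(|Du|^2+\varepsilon^2)$ is correct (since $Dv=2D^2u\,Du$); and the first-order condition $\zeta^2Dv=-v\,D(\zeta^2)-2\Lambda u\,Du$ at the maximum of $W=\zeta^2v+\Lambda u^2$ does tame it, because after multiplying by $\zeta^2$ the problematic contribution becomes $C\bigl(v|D\zeta|^2+\Lambda^2u^2/\zeta^2\bigr)$ and the $1/\zeta^2$ cancels against the factor $\zeta(x^*)^2$ when you pass back to $W(x^*)$, so the scheme closes with $\sup_{Q_3}|Du|^2\le C\|u\|^2_{L^\infty(Q_4)}$ uniformly in $\varepsilon$. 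Two caveats, both minor and fixable: your constant also depends on $\|Dp\|_{L^\infty(Q_4)}$ (through the $\|Dp\|\,|Dv|$ term), which is harmless where the lemma is applied since after the scaling in the second part of Section \ref{sec-3} one always has $\|Dp\|\le\beta\le 1$, but it is strictly more than the dependence stated in the lemma --- a dependence that the paper's citation to \cite{IJS19} (where $p$ is constant) arguably glosses over as well, since adapting Ishii--Lions to variable $p(x,t)$ brings in the Lipschitz norm of $p$ exactly as in the comparison argument of Section \ref{sec-4}; and the cutoff should vanish near the parabolic boundary but not at the top $t=0$, with the maximum taken over a slightly smaller closed cylinder such as $\overline{Q_{7/2}}$, since $u$ is only smooth in the open cylinder. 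In terms of trade-offs, the paper's route is shorter and works directly with the viscosity framework, while yours is elementary, exploits the smoothness of solutions to \eqref{3-1} guaranteed by Lemma \ref{lem3-7}, and makes the $\varepsilon$-independence of the constant completely explicit.
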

\begin{proof}
As the proof of Lipschitz estimates in Section 2 in \cite{IJS19}, this conclusion holds as well. It is enough to notice that the matrix
$$
I+(p(x,t)-2)\frac{q\otimes q}{|q|^2+\varepsilon^2}  \quad (q\in \mathbb{R}^n)
$$
is uniformly elliptic.
\end{proof}

\begin{remark}
It follows from Lemma \ref{lem3-1} that the spatial gradient $Du$ is bounded. By normalization we may assume $|Du|\leq1$ below.
\end{remark}

In what follows, we first show that the solutions of Eq. \eqref{1-1} are of $C^{1,\alpha}$ for the case that $\|Dp\|_{L^\infty(Q_1)}$ is small enough. Then via doing a scaling work, we verify the solutions are $C^{1,\alpha}$-regular when $p(x,t)\in C^1(\overline{Q_1})$, i.e., $Dp$ exhibits a general boundness.

\subsection{H\"{o}lder regularity of spatial gradient for the case that $\|Dp\|_{L^\infty(Q_1)}$ is small enough}

We now derive the improvement of oscillation of $Du\cdot e$.

\begin{lemma}
\label{lem3-2}
Suppose $u$ is a smooth viscosity solution to \eqref{3-1} in $Q_1$. For every $0<l<1,\mu>0$, if $p(x,t)\in C^1(\overline{Q_1})$ and $\|Dp\|_{L^\infty(Q_1)}\leq \beta$, where $\beta$ is a small enough constant depending on $n,p_-,p_+,\mu$ and $l$, then we can conclude that there are two positive constants $\tau$ and $\delta$, the former depending only on $n,\mu$ and the latter depending on $n,p_-,p_+,\mu$ and $l$, such that for arbitrary $e\in\mathbb{S}^{n-1}$, if
$$
|\{(x,t)\in Q_1: Du\cdot e\leq l\}|>\mu|Q_1|,
$$
we have
$$
Du\cdot e<1-\delta \quad \text{in }  Q_\tau.
$$
\end{lemma}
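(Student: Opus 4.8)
The plan is to apply Lemma \ref{lem2-3} (De Giorgi-type improvement of oscillation for supersolutions of uniformly parabolic linear equations) to the scalar function $v := 1 - Du\cdot e$, after checking two things: first, that $v$ is nonnegative (it is, since $|Du|\le 1$ after the normalization in the Remark following Lemma \ref{lem3-1}); second, that $v$ is a supersolution — up to a controllable error coming from $Dp$ — of a uniformly parabolic equation in nondivergence form. The main work is therefore the differentiation step. Since $u$ is a smooth solution of \eqref{3-1}, I would differentiate the equation $u_t = a^\varepsilon_{ij}(x,t,Du)u_{ij}$ in the direction $e$. Writing $w := Du\cdot e = D_e u$, a direct computation gives
$$
w_t = a^\varepsilon_{ij}(x,t,Du)\,w_{ij} + \frac{\partial a^\varepsilon_{ij}}{\partial q_k}(x,t,Du)\,u_{ij}\,w_k + \big(D_e p\big)(x,t)\,\frac{u_i u_j}{|Du|^2+\varepsilon^2}\,u_{ij},
$$
because the only explicit $x$-dependence of $a^\varepsilon_{ij}$ is through $p(x,t)$. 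The first two terms on the right are a linear uniformly parabolic operator applied to $w$ (the matrix $a^\varepsilon_{ij}$ is uniformly elliptic with constants depending only on $p_-,p_+$, as noted in Lemma \ref{lem3-1}, and the first-order coefficients are bounded because $|Du|\le 1$ and $u$ is smooth — here one quietly uses interior second-derivative bounds for the regularized equation, which follow as in \cite{IJS19,JS17}). The last term is the error; it is bounded in absolute value by $\|Dp\|_{L^\infty(Q_1)}\,|p(\cdot)-2|^{-1}\cdot|\text{(second-order part of the equation)}|$ — more simply, by $C\|Dp\|_{L^\infty(Q_1)}\|D^2u\|_{L^\infty}$, hence by $C\beta$.

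Consequently $v = 1-w$ satisfies $v_t - \tilde a_{ij}v_{ij} - b_k v_k \ge -C\beta$ for a uniformly parabolic $\tilde a_{ij}$ with bounded drift $b_k$. To absorb the $-C\beta$ error and genuinely land in the hypothesis of Lemma \ref{lem2-3}, I would correct $v$ by a linear-in-time function: set $\bar v := v + C\beta\,(t+1)$ (or a small multiple thereof), which is then a genuine nonnegative supersolution of the homogeneous linear equation on $Q_1$, up to also using that $C\beta$ can be taken $\le$ any prescribed small number by choosing $\beta$ small. The hypothesis $|\{Du\cdot e\le l\}|>\mu|Q_1|$ says $|\{v\ge 1-l\}|>\mu|Q_1|$, i.e. $|\{v/(1-l)\ge 1\}|>\mu|Q_1|$; since $0<l<1$ the function $v/(1-l)$ (suitably corrected) is still a nonnegative supersolution, so Lemma \ref{lem2-3}, applied with this $\mu$, yields $\tau\in(0,1)$ depending on $n,\mu$ and $\gamma>0$ depending on $n,p_-,p_+,\mu$ (through the ellipticity/drift bounds) with $v \ge \gamma(1-l)$ in $Q_\tau$, after first shrinking $\beta$ so that the linear correction $C\beta(t+1)\le \frac12\gamma(1-l)$ on $Q_1$. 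Setting $\delta := \tfrac12\gamma(1-l)$ gives $Du\cdot e = 1 - v \le 1-\delta$ on $Q_\tau$, which is the claim; note $\delta$ depends on $n,p_-,p_+,\mu,l$ and $\tau$ on $n,\mu$, as required.

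The main obstacle I anticipate is not the structure of the argument but making the error term rigorously small in the right norm: one needs that for smooth solutions of the nondegenerate regularized equation \eqref{3-1} the second spatial derivatives are bounded on interior cylinders by a constant depending only on $n,p_-,p_+$ (and $\|u\|_{L^\infty}$), uniformly in $\varepsilon\in(0,1)$, so that the bound $C\beta\|D^2u\|_{L^\infty}$ can be made $\le$ a fixed small constant by shrinking $\beta$ — this is exactly the kind of a priori estimate the paper says it imports from \cite{IJS19,JS17}. A secondary point is that Lemma \ref{lem2-3} is stated on $Q_1$ while we may only control $v$ on $Q_1$; since all estimates above are interior on $Q_1$ and we only need the conclusion on $Q_\tau$, a harmless preliminary rescaling from $Q_1$ to a slightly smaller cylinder (absorbed into the constants) handles this. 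With these in hand the proof is routine.
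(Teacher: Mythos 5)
There is a genuine gap, and it sits exactly where you flagged your ``main obstacle'': your argument needs an interior bound on $\|D^2u\|_{L^\infty}$ that is uniform in $\varepsilon\in(0,1)$ and depends only on $n,p_-,p_+,\|u\|_{L^\infty}$. No such bound is available at this stage --- the only uniform-in-$\varepsilon$ estimate in hand is the Lipschitz bound of Lemma \ref{lem3-1}, and a uniform $C^2$ estimate would be strictly stronger than the $C^{1,\alpha}$ conclusion the whole scheme is trying to produce (it also fails for the limiting degenerate equation). For fixed $\varepsilon$ the solution is smooth, but the second-derivative bounds from the classical theory degenerate as $\varepsilon\to 0$, so bounding your error term by $C\beta\|D^2u\|_{L^\infty}$ and then shrinking $\beta$ forces $\beta$ to depend on $\varepsilon$ (and on the particular solution), which destroys both the stated dependence of $\beta$ on $n,p_-,p_+,\mu,l$ and the uniformity needed to pass to the limit later. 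The same issue infects your ``bounded drift'' $b_k=\frac{\partial a^\varepsilon_{ij}}{\partial q_k}u_{ij}$: not only is $u_{ij}$ uncontrolled, but $\frac{\partial a^\varepsilon_{ij}}{\partial q_k}$ itself is of size $|p-2|(|Du|^2+\varepsilon^2)^{-1/2}$, which blows up where the gradient and $\varepsilon$ are small; and Lemma \ref{lem2-3} as stated has no first-order term at all.

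The paper's proof is designed precisely to avoid any bound on $D^2u$. Instead of $w=Du\cdot e$ it works with $w=(Du\cdot e-l+\rho|Du|^2)^+$, $\rho=l/4$. The extra term $\rho|Du|^2$ does two jobs at once: on the set $\{w>0\}$ one has $|Du|>l/2$, so the singular coefficient derivatives $a^\varepsilon_{ij,m}$ are bounded by $Cb/l$ there; and differentiating $|Du|^2$ produces the good (Bochner-type) term $-2\rho\,a^\varepsilon_{ij}u_{ki}u_{kj}\sim-\rho|D^2u|^2$, which absorbs, via Cauchy--Schwarz, every term containing second derivatives --- including the $Dp$ error, which ends up contributing only a constant of order $\|Dp\|_{L^\infty}^2$. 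What remains is a differential inequality $w_t\le a^\varepsilon_{ij}w_{ij}+\nu|Dw|^2+\overline{c}$ with a \emph{quadratic} gradient term, which is then removed by the exponential substitution $U=\frac1\nu\bigl(1-e^{\nu(w-\overline{c}t-\overline{w})}\bigr)$, yielding a genuine nonnegative supersolution of the drift-free linear equation to which Lemma \ref{lem2-3} applies; the smallness condition on $\|Dp\|_{L^\infty}$ enters only at the very end, to ensure $\overline{c}<\nu^{-1}(1-e^{\nu(l-1)})\gamma_0$. If you want to repair your proposal, you should adopt this mechanism (or an equivalent one) rather than invoke second-derivative bounds: without the $\rho|Du|^2$ correction and the exponential change of variables, the reduction to Lemma \ref{lem2-3} does not go through uniformly in $\varepsilon$.
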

\begin{proof}
Let
$$
a^\varepsilon_{ij,m}:=\frac{\partial a^\varepsilon_{ij}(x,t,Du)}{\partial u_m}
=(p(x,t)-2)\left(\frac{\delta_{im}u_j+\delta_{jm}u_i}{|Du|^2+\varepsilon^2}-\frac{2u_iu_ju_m}{(|Du|^2+\varepsilon^2)^2}\right).
$$
Differentiating Eq. \eqref{3-1} in $x_k$ derives
$$
(u_k)_t=a^\varepsilon_{ij}(u_k)_{ij}+a^\varepsilon_{ij,m}u_{ij}(u_k)_m+p_k\frac{u_iu_j}{|Du|^2+\varepsilon^2}u_{ij},
$$
where $p_k:=\frac{\partial p(x,t)}{\partial x_k}$.
Define
$$
w=(Du\cdot e-l+\rho|Du|^2)^+
$$
with $\rho=\frac{l}{4}$. Then for the function $Du\cdot e-l$ we have
$$
(Du\cdot e-l)_t=a^\varepsilon_{ij}(Du\cdot e-l)_{ij}
+a^\varepsilon_{ij,m}u_{ij}(Du\cdot e-l)_m+Dp\cdot e\frac{u_iu_ju_{ij}}{|Du|^2+\varepsilon^2},
$$
and for $|Du|^2$ derive
$$
(|Du|^2)_t=a^\varepsilon_{ij}(|Du|^2)_{ij}+a^\varepsilon_{ij,m}u_{ij}(|Du|^2)_m
+Dp\cdot Du\frac{u_iu_ju_{ij}}{|Du|^2+\varepsilon^2}-2a^\varepsilon_{ij}u_{ki}u_{kj},
$$
where $Dp$ denotes the spatial gradient of $p(x,t)$.

Merging the previous two identities arrives at in the region $\Omega_+:=\{(x,t)\in Q_1:w>0\}$
$$
w_t=a^\varepsilon_{ij}w_{ij}+a^\varepsilon_{ij,m}u_{ij}w_m
+Dp\cdot(e+\rho Du)\frac{u_iu_ju_{ij}}{|Du|^2+\varepsilon^2}-2\rho a^\varepsilon_{ij}u_{ki}u_{kj}.
$$
Noting that $|Du|>\frac{l}{2}$ in $\Omega_+$, we have
\begin{equation}
\label{3-2}
|a^\varepsilon_{ij,m}|\leq\frac{4|p(x,t)-2|}{l}\leq\frac{4}{l}\max\{|p_+-2|,|p_--2|\}=:\frac{4}{l}b.
\end{equation}
By Cauchy-Schwarz inequality and \eqref{3-2}, we obtain
\begin{align*}
w_t&\leq a^\varepsilon_{ij}w_{ij}+\frac{4}{l}b|Dw|\sum^n_{i,j}|u_{ij}|+(1+\rho)|Dp|\frac{|\langle D^2u\cdot Du,Du\rangle|}{|Du|^2+\varepsilon^2}-2\rho a^\varepsilon_{ij}u_{ki}u_{kj}\\
&\leq a^\varepsilon_{ij}w_{ij}+\varepsilon|D^2u|^2+\frac{4n^2b^2}{\varepsilon l^2}|Dw|^2+(1+\rho)|Dp||D^2u|\\
&\quad-2\rho\left(|D^2u|^2+(p(x,t)-2)\frac{|D^2u\cdot Du|^2}{|Du|^2+\varepsilon^2}\right)\\
&\leq a^\varepsilon_{ij}w_{ij}+2\varepsilon|D^2u|^2+\frac{4n^2b^2}{\varepsilon l^2}|Dw|^2+\frac{(1+\rho)^2}{4\varepsilon}|Dp|^2\\
&\quad-2\rho\left(|D^2u|^2+(p(x,t)-2)\frac{|D^2u\cdot Du|^2}{|Du|^2+\varepsilon^2}\right).
\end{align*}

Denote
$$
\Omega_1:=\{p(x,t)\geq2\}\cap\Omega_+ \quad \text{and} \quad \Omega_2:=\{p(x,t)<2\}\cap\Omega_+.
$$

In $\Omega_1$, we get
\begin{equation*}
w_t\leq a^\varepsilon_{ij}w_{ij}+2\varepsilon|D^2u|^2+\frac{4n^2b^2}{\varepsilon l^2}|Dw|^2+\frac{(1+\rho)^2}{4\varepsilon}|Dp|^2-2\rho|D^2u|^2;
\end{equation*}

In $\Omega_2$, we have
\begin{align*}
w_t&\leq a^\varepsilon_{ij}w_{ij}+2\varepsilon|D^2u|^2+\frac{4n^2b^2}{\varepsilon l^2}|Dw|^2+\frac{(1+\rho)^2}{4\varepsilon}|Dp|^2+2\rho(1-p(x,t))|D^2u|^2\\
&\leq a^\varepsilon_{ij}w_{ij}+2\varepsilon|D^2u|^2+\frac{4n^2b^2}{\varepsilon l^2}|Dw|^2+\frac{(1+\rho)^2}{4\varepsilon}|Dp|^2+2\rho(1-p_-)|D^2u|^2.
\end{align*}

\textbf{Case 1.} If $2\leq p_-$, then we obtain by choosing $\varepsilon=\rho$
\begin{align*}
w_t&\leq a^\varepsilon_{ij}w_{ij}+\frac{4n^2b^2}{\rho l^2}|Dw|^2+\frac{(1+\rho)^2}{4\rho}|Dp|^2\\
&\leq a^\varepsilon_{ij}w_{ij}+\frac{4n^2b^2}{\rho l^2}|Dw|^2+\frac{(1+\rho)^2}{4\rho}M^2
\end{align*}
in $\Omega_+$, where $b=p_+-2$ and $M=\|Dp\|_{L^\infty(Q_1)}$. Let
$$
\overline{c}=\frac{(1+\rho)^2}{4\rho}M^2.
$$
Thereby it satisfies in the viscosity sense that
\begin{equation*}
w_t\leq a^\varepsilon_{ij}w_{ij}+\frac{4n^2b^2}{\rho l^2}|Dw|^2+\overline{c} \quad \text{in }  Q_1.
\end{equation*}
Set $\overline{w}=1-l+\rho+\overline{c}$ and $\nu=\frac{c_1}{\rho l^2}$, where $c_1$ will be chosen later. Define
$$
U=\frac{1}{\nu}(1-e^{\nu(w-\overline{c}t-\overline{w})}).
$$

Observe that
$$
a^\varepsilon_{ij}w_{ij}+\nu a^\varepsilon_{ij}w_iw_j\geq a^\varepsilon_{ij}w_{ij}+\nu|Dw|^2.
$$
Hence we can take $c_1=4n^2(p_+-2)^2$ such that
$$
U_t\geq a^\varepsilon_{ij}U_{ij} \quad \text{in }  Q_1
$$
in the viscosity sense. Obviously, $U\geq0$ in $Q_1$.

If $Du\cdot e\leq l$, then it follows that
$$
|\{(x,t)\in Q_1:U\geq\nu^{-1}(1-e^{\nu(l-1)})\}|>\mu|Q_1|.
$$
Thus we can conclude from Lemma \ref{lem2-3} that there exist two constants $\tau,\gamma_0>0$ such that
$$
U\geq\nu^{-1}(1-e^{\nu(l-1)})\gamma_0 \quad \text{in }  Q_{\tau},
$$
where $\tau$ and $\gamma_0$ depend on $\mu,n$ and $n,p_-,p_+,\mu$ separately. Since $w\leq \overline{w}+\overline{c}t$, we derive
$$
U\leq\overline{w}-w+\overline{c}t.
$$
Therefore in $Q_\tau$ we get
$$
Du\cdot e+\rho|Du|^2\leq1+\rho-\nu^{-1}(1-e^{\nu(l-1)})\gamma_0+\overline{c}+\overline{c}t.
$$
By $|Du\cdot e|\leq|Du|$, the above inequality becomes
$$
Du\cdot e+\rho Du\cdot e\leq1+\rho-\nu^{-1}(1-e^{\nu(l-1)})\gamma_0+\overline{c}\quad\text{in } Q_\tau.
$$
Furthermore,
$$
Du\cdot e\leq\frac{-1+\sqrt{1+4\rho(1+\rho-\nu^{-1}(1-e^{\nu(l-1)})\gamma_0+\overline{c})}}{2\rho} \quad\text{in }  Q_\tau.
$$
Here we need
$$
\frac{-1+\sqrt{1+4\rho(1+\rho-\nu^{-1}(1-e^{\nu(l-1)})\gamma_0+\overline{c})}}{2\rho}<1.
$$
Namely,
\begin{eqnarray*}
&&\qquad \overline{c}<\nu^{-1}(1-e^{\nu(l-1)})\gamma_0\\
&&\Longleftrightarrow \frac{(1+\rho)^2}{4\rho}M^2<\nu^{-1}(1-e^{\nu(l-1)})\gamma_0\\
&&\Longleftrightarrow M^2<\frac{4\rho\gamma_0}{\nu(1+\rho)^2}(1-e^{\nu(l-1)}),
\end{eqnarray*}
where $\nu=\frac{4n^2}{\rho l^2}(p_+-2)^2$. In other words, when $M:=\|Dp\|_{L^\infty(Q_1)}$ is small enough depending on $n,p_-,p_+,l,$ and $\mu$, we get
$$
Du\cdot e\leq 1-\delta \quad \text{in }   Q_\tau,
$$
where $\delta>0$ depends on $n,p_-,p_+,l,$ and $\mu$.

\textbf{Case 2.} If $1<p_-<2$, we obtain
$$
w_t\leq a^\varepsilon_{ij}w_{ij}+\frac{4n^2b^2}{\rho l^2(p_--1)}|Dw|^2+\frac{(1+\rho)^2}{4\rho(p_--1)}|Dp|^2 \quad \text{in }  \Omega_+,
$$
where $b=\max\{|p_+-2|,|p_--2|\}$. Let
$$
\widehat{c}=\frac{(1+\rho)^2}{4\rho(p_--1)}M^2.
$$
It follows that
$$
w_t\leq a^\varepsilon_{ij}w_{ij}+\frac{4b^2}{\rho l^2(p_--1)}|Dw|^2+\widehat{c} \quad \text{in }  Q_1
$$
in the viscosity sense.

Notice that
$$
a^\varepsilon_{ij}w_{ij}+\nu a^\varepsilon_{ij}w_iw_j\geq a^\varepsilon_{ij}w_{ij}+\nu(p_--1)|Dw|^2
$$
with $\nu=\frac{c_2}{\rho l^2(p_--1)}>0$, where $c_2$ is a constant determined later. Denote $\widehat{w}=1-l+\rho+\widehat{c}$ and $V=\frac{1}{\nu}(1-e^{\nu(w-\widehat{c}t-\widehat{w})})$. We take $c_2=\frac{4n^2b^2}{p_--1}$ such that
$$
V_t\geq a^\varepsilon_{ij}V_{ij} \quad \text{in }  Q_1
$$
in the viscosity sense. Apparently, $V\geq0$ in $Q_1$.

For $Du\cdot e\leq l$, by the assumption we have
$$
|\{(x,t)\in Q_1:V\geq\nu^{-1}(1-e^{\nu(l-1)})\}|>\mu|Q_1|.
$$
Using again Lemma \ref{lem2-3} deduces that there are two positive constants $\tau$ and $\gamma_0$, depending respectively on $\mu,n$ and $n,p_-,p_+,\mu,l$, such that
$$
V\geq \nu^{-1}(1-e^{\nu(l-1)})\gamma_0 \quad \text{in }  Q_\tau.
$$
We further obtain
$$
Du\cdot e+\rho(Du\cdot e)^2\leq1+\rho-\nu^{-1}(1-e^{\nu(l-1)})\gamma_0+\widehat{c} \quad \text{in }  Q_\tau.
$$
Thus
$$
Du\cdot e\leq\frac{-1+\sqrt{1+4\rho(1+\rho-\nu^{-1}(1-e^{\nu(l-1)})\gamma_0+\widehat{c})}}{2\rho} \quad\text{in }  Q_\tau.
$$
Analogous to Case 1, for $M=\|Dp\|_{L^\infty(Q_1)}$ sufficiently small and depending on $n,p_-,p_+,l$ and $\mu$, we arrive at
$$
Du\cdot e\leq 1-\delta \quad \text{in }  Q_\tau,
$$
where $\delta>0$ depends on $n,p_-,p_+,l,$ and $\mu$. We now complete the proof.
\end{proof}

\begin{remark}
In the case that $p_-\geq2$, we note that
$$
I\leq(p_--1)I\leq(a^\varepsilon_{ij}(x,t,q))_{n\times n}\leq(p_+-1)I
$$
for all $\varepsilon\in(0,1),q\in\mathbb{R}^n$ and $(x,t)\in Q_1$, so the constant $\gamma_0$ appearing in Case 1 may not depend on $p_-$.
\end{remark}

\begin{lemma}
\label{lem3-3}
Let $u$ be a smooth solution of \eqref{3-1} in $Q_1$. For any $0<l<1,\mu>0$, when $\|Dp\|_{L^\infty(Q_1)}\leq \beta$ with $\beta$ being a sufficiently small constant depending on $n,p_-,p_+,l,\mu$, there is $\tau>0$ (small) depending on $n,\mu$, and $\delta>0$ depending on $n,p_-,p_+,l,\mu$, such that for any nonnegative integer $k$, if
$$
|\{(x,t)\in Q_{\tau^i}: Du\cdot e\leq l(1-\delta)^i\}|>\mu|Q_{\tau^i}| \quad \text{for all }  e\in\mathbb{S}^{n-1},
$$
and $i=0,1,\cdots,k$, then
$$
|Du|<(1-\delta)^{i+1} \quad \text{in }  Q_{\tau^{i+1}}
$$
for all $i=0,1,\cdots,k$.
\end{lemma}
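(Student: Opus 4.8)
The plan is to argue by induction on $i$, using the parabolic scaling invariance of \eqref{3-1} to reduce the $i$-th step to a single application of Lemma~\ref{lem3-2}. Since the hypotheses of the lemma for the indices $0,1,\dots,k$ contain, for each $j\le k$, the hypotheses for $0,1,\dots,j$, it suffices to prove the per-scale statement: \emph{for every $i\ge0$, if}
$$
|\{(x,t)\in Q_{\tau^j}: Du\cdot e\le l(1-\delta)^j\}|>\mu|Q_{\tau^j}|\quad\text{for all }e\in\mathbb{S}^{n-1},\ j=0,\dots,i,
$$
\emph{then $|Du|<(1-\delta)^{i+1}$ in $Q_{\tau^{i+1}}$,} where $\tau=\tau(n,\mu)$ and $\delta=\delta(n,p_-,p_+,l,\mu)$ are precisely the constants furnished by Lemma~\ref{lem3-2}. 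The case $i=0$ is immediate: since $|Du|\le1$ in $Q_1$, applying Lemma~\ref{lem3-2} to $u$ with each fixed $e\in\mathbb{S}^{n-1}$ gives $Du\cdot e<1-\delta$ in $Q_\tau$; as $Q_\tau$ is independent of $e$ and $\sup_{e\in\mathbb{S}^{n-1}}Du\cdot e=|Du|$, this yields $|Du|<1-\delta$ in $Q_\tau=Q_{\tau^1}$.

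For the inductive step, fix $1\le i\le k$ and assume $|Du|<(1-\delta)^i$ in $Q_{\tau^i}$ (the conclusion at level $i-1$, available by induction). Put $r=\tau^i$, $\kappa=(1-\delta)^i$ and define $v(x,t)=u(rx,r^2t)/(\kappa r)$ on $Q_1$. A direct computation shows that $v$ is a smooth solution in $Q_1$ of
$$
v_t=\left(\delta_{ij}+(\tilde p(x,t)-2)\frac{v_iv_j}{|Dv|^2+\tilde\varepsilon^2}\right)v_{ij},
$$
i.e. of \eqref{3-1} with $p,\varepsilon$ replaced by $\tilde p(x,t):=p(rx,r^2t)$ and $\tilde\varepsilon:=\varepsilon/\kappa$. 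Moreover $\tilde p\in C^1(\overline{Q_1})$ with $\|D\tilde p\|_{L^\infty(Q_1)}=r\|Dp\|_{L^\infty(Q_r)}\le r\beta\le\beta$, and $|Dv(x,t)|=\kappa^{-1}|Du(rx,r^2t)|<1$ in $Q_1$ by the inductive hypothesis. Finally, the change of variables $(y,s)=(rx,r^2t)$ has Jacobian $r^{n+2}=|Q_r|/|Q_1|$ and converts $Du\cdot e\le l(1-\delta)^i$ into $Dv\cdot e\le l$, so the hypothesis at level $i$ becomes $|\{(x,t)\in Q_1:Dv\cdot e\le l\}|>\mu|Q_1|$ for all $e\in\mathbb{S}^{n-1}$. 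Hence Lemma~\ref{lem3-2} applies to $v$ and gives $Dv\cdot e<1-\delta$ in $Q_\tau$ for every $e$, so $|Dv|<1-\delta$ in $Q_\tau$; scaling back, $|Du|<\kappa(1-\delta)=(1-\delta)^{i+1}$ in $Q_{r\tau}=Q_{\tau^{i+1}}$, which closes the induction.

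The delicate point---and the one I expect to require the most care---is that the constants $\tau,\delta$ produced by Lemma~\ref{lem3-2} for the rescaled equation must coincide with those for the original one, so that the iteration is genuinely self-improving. For $\tau$ this is automatic, since it depends only on $n,\mu$. For $\delta$ one observes that the rescaling only improves the relevant data: $\tilde p$ takes values in $[\inf_{Q_r}p,\sup_{Q_r}p]\subseteq[p_-,p_+]$, so the ellipticity constants of $a^{\tilde\varepsilon}_{ij}$ and the bound on $|\tilde p-2|$ remain controlled by $\min\{1,p_--1\}$, $\max\{1,p_+-1\}$ and $\max\{|p_+-2|,|p_--2|\}$, uniformly in the regularization parameter; inspecting the proof of Lemma~\ref{lem3-2}, $\delta$ enters only through these quantities together with $n,l,\mu$, so one and the same $\delta=\delta(n,p_-,p_+,l,\mu)$ serves at every scale. (In particular \eqref{3-1} and Lemma~\ref{lem3-2} impose no upper bound on the regularization parameter, so the possible growth $\tilde\varepsilon=\varepsilon/(1-\delta)^i\ge\varepsilon$ causes no difficulty.)
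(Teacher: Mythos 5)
Your proof is correct and follows essentially the same route as the paper's: induction with the parabolic-Lipschitz rescaling $v(x,t)=u(\tau^i x,\tau^{2i}t)/(\tau^i(1-\delta)^i)$, verification that the rescaled function solves \eqref{3-1} with $h_i(x,t)=p(\tau^ix,\tau^{2i}t)$ and $\tilde\varepsilon=\varepsilon(1-\delta)^{-i}$, and then a single application of Lemma~\ref{lem3-2}. The only addition beyond the paper's write-up is your explicit discussion of why $\tau$ and $\delta$ are scale-independent and why the growth of the regularization parameter is harmless, which is a reasonable point to flag but is implicitly handled by the fact that the constants in Lemma~\ref{lem3-2} do not depend on $\varepsilon$.
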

\begin{proof}
We prove this lemma by induction. For $k=0$, the conclusion holds obviously by Lemma \ref{lem3-2}. Suppose the conclusion is true for $i=0,1,\cdots,k-1$. We are going to verify it for $i=k$. Set
$$
v(x,t):=\frac{1}{\tau^k(1-\delta)^k}u(\tau^kx,\tau^{2k}t).
$$
Then $v$ satisfies
$$
v_t=\Delta v+(h_k(x,t)-2)\frac{v_iv_j}{|Dv|^2+\varepsilon^2(1-\delta)^{-2k}}v_{ij} \quad \text{in }  Q_1,
$$
where $h_k(x,t)=p(\tau^kx,\tau^{2k}t)$. We can see from the induction assumptions that $|Dv|<1$ in $Q_1$, and
$$
|\{(x,t)\in Q_1: Dv\cdot e\leq l\}|>\mu|Q_1| \quad \text{for all }  e\in\mathbb{S}^{n-1}.
$$
Furthermore, we observe
\begin{equation*}
1<p_-\leq h_k(x,t)\leq p_+<\infty
\end{equation*}
and
\begin{equation*}
|Dh_k(x,t)|=|\tau^kDp(y,s)|\leq\tau^k\|Dp\|_{L^\infty(Q_1)},
\end{equation*}
where $(y,s)=(\tau^kx,\tau^{2k}t)$ and $(x,t)\in Q_1$. Hence from Lemma \ref{lem3-2} we get
$$
Dv\cdot e\leq1-\delta  \quad \text{in }  Q_\tau
$$
for all $e\in\mathbb{S}^{n-1}$.
Namely, $|Dv|\leq1-\delta$ in $Q_\tau$. Rescaling back, we arrive at
$$
|Du|<(1-\delta)^{k+1} \quad \text{in } Q_{\tau^{k+1}}.
$$
We finish the proof.
\end{proof}

\begin{remark}
Noting that $0<\tau<1$, when $Dp(x,t)$ is bounded, we can see that
$$
|Dh_k(x,t)|\rightarrow0 \quad \text{uniformly in }  Q_1,
$$
by sending $k\rightarrow\infty$. That is to say, for $k$ large enough, we could remove the restriction that $\|Dp\|_{L^\infty(Q_1)}$ is sufficiently small.
\end{remark}

We shall present a lemma, playing an important role in the proof of Theorem \ref{thm3-5}, which is a regularity estimate of small perturbation solutions of fully nonlinear parabolic equations.
\begin{lemma}
\label{lem3-4}
Let $u$ be a smooth solution to \eqref{3-1} in $Q_1$. For $\gamma=\frac{1}{2}$, there are two positive constants $\eta$ (small) and $C$ (large), both depending on $n,p_-,p_+$ and $\|D_{x,t}p\|_{L^\infty(Q_1)}$ such that if a linear function $L(x)$ with $\frac{1}{2}\leq|DL|\leq2$ satisfies
$$
\|u(x,t)-L(x)\|_{L^\infty(Q_1)}\leq\eta,
$$
then
$$
\|u(x,t)-L(x)\|_{C^{2,1/2}(Q_{1/2})}\leq C.
$$
\end{lemma}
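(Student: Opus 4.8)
The plan is to derive the estimate from a parabolic version of Savin's small perturbation regularity theorem, the only genuine issue being that all constants must be uniform in $\varepsilon\in(0,1)$. After subtracting a constant from both $u$ and $L$ (which changes neither \eqref{3-1} nor the norms in question) we may assume $L(0)=0$, so that $\sup_{Q_1}|L|=|DL|\le 2$ and $\|u\|_{L^\infty(Q_1)}\le|DL|+\eta\le 3$; a rescaled version of Lemma \ref{lem3-1} then gives an $\varepsilon$-uniform bound $\|Du\|_{L^\infty(Q_{3/4})}\le C_0(n,p_-,p_+)$, which confines $Du$ to a bounded set and disposes of the a~priori possibility of a large gradient.

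First I would pass to $v:=u-L$. Since $L$ is affine, $\partial_tL\equiv 0$, $D^2L\equiv 0$, $Du=Dv+DL$ and $D^2u=D^2v$, so substituting into \eqref{3-1} shows that $v$ is a smooth solution of
$$
v_t=G(D^2v,Dv,x,t):=\left(\delta_{ij}+(p(x,t)-2)\frac{(Dv+DL)_i(Dv+DL)_j}{|Dv+DL|^2+\varepsilon^2}\right)v_{ij}\quad\text{in }Q_1,
$$
with $G(0,q,x,t)=0$ for every $(q,x,t)$ (so the zero function solves the equation) and $\|v\|_{L^\infty(Q_1)}\le\eta$. The point is that $G$ has $\varepsilon$-uniform structure near the plane $L$: it is linear and uniformly parabolic in the Hessian slot with ellipticity constants $\min\{1,p_--1\}$ and $\max\{1,p_+-1\}$; on the slab $\{|q|\le 1/4\}$ one has $|q+DL|\ge|DL|/2\ge 1/4$, hence $|q+DL|^2+\varepsilon^2\ge 1/16$ independently of $\varepsilon\in(0,1)$, so there $G$ is smooth in $(M,q)$ with all first derivatives bounded by constants depending only on $n,p_-,p_+$; and since $G$ depends on $(x,t)$ only through $p\in C^1$, it is Lipschitz in $(x,t)$ with constant proportional to $|M|\,\|D_{x,t}p\|_{L^\infty(Q_1)}$.

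Next I would apply the parabolic small perturbation theorem to $v$: because the zero function solves the equation, $G$ satisfies the structural conditions above with $\varepsilon$-uniform constants, and $v$ obeys the $\varepsilon$-uniform Lipschitz bound above, there exist $\eta\in(0,1)$ and $\alpha_0\in(0,1)$, depending only on $n,p_-,p_+$ and $\|D_{x,t}p\|_{L^\infty(Q_1)}$, such that $\|v\|_{L^\infty(Q_1)}\le\eta$ forces $\|v\|_{C^{1,\alpha_0}(Q_{3/4})}\le C\|v\|_{L^\infty(Q_1)}\le C\eta$. Shrinking $\eta$ so that $C\eta\le 1/4$ we obtain $1/4\le|Du|=|Dv+DL|\le 9/4$ throughout $Q_{3/4}$, i.e.\ the gradient is now nondegenerate. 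On $Q_{3/4}$ the function $u$ therefore solves the linear parabolic equation $u_t=\bar a_{ij}(x,t)u_{ij}$ with $\bar a_{ij}(x,t):=\delta_{ij}+(p(x,t)-2)u_iu_j/(|Du|^2+\varepsilon^2)$, which is uniformly parabolic and, since $q\mapsto\delta_{ij}+(p-2)q_iq_j/(|q|^2+\varepsilon^2)$ is Lipschitz on $\{|q|\ge 1/4\}$ with an $\varepsilon$-independent constant, $Du\in C^{\alpha_0}$, and $p\in C^1$, has $C^{\alpha_0}$ coefficients in $(x,t)$ with $\varepsilon$-independent norm. Classical parabolic Schauder estimates give $\|u\|_{C^{2,\alpha_0}(Q_{5/8})}\le C$; then $Du$ is Lipschitz, so $\bar a_{ij}$ is Lipschitz in $(x,t)$, and a second application of Schauder upgrades this to $\|u\|_{C^{2,\alpha}(Q_{1/2})}\le C$ for every $\alpha\in(0,1)$, in particular $\alpha=1/2$. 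Since $u-L$ differs from $u$ by an affine function, $\|u-L\|_{C^{2,1/2}(Q_{1/2})}\le C$, with $\eta$ and $C$ depending only on $n,p_-,p_+$ and $\|D_{x,t}p\|_{L^\infty(Q_1)}$.

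The hard part will not be the Schauder bootstrap but controlling the $\varepsilon$-dependence: one must be sure that the threshold $\eta$ and the constant $C$ delivered by the perturbation theorem do not deteriorate as $\varepsilon\to 0$. This is exactly the role of the hypothesis $|DL|\ge 1/2$, which pins $Du$ to a set on which $|Du|^2+\varepsilon^2$ stays bounded below uniformly in $\varepsilon$, giving $G$ (and then $\bar a_{ij}$) $\varepsilon$-uniform ellipticity and $C^1$/Lipschitz bounds. If one prefers to avoid quoting the perturbation theorem, the same conclusion can be obtained by contradiction and compactness: a failing sequence of solutions $u_j$ of \eqref{3-1} with $\varepsilon_j\to\varepsilon_\infty\in[0,1]$, together with affine $L_j\to L_\infty$, would satisfy $u_j\to L_\infty$ uniformly with $|DL_\infty|\ge 1/2$, and then the stability of viscosity solutions established in Section \ref{sec-4}, combined with interior estimates for the limiting nondegenerate uniformly parabolic equation, would contradict the blow-up of the $C^{2,1/2}$ norms.
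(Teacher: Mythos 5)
Your proposal is correct and rests on the same key observation as the paper: since $L$ is affine with $|DL|\ge 1/2$, $L$ is itself a solution of \eqref{3-1}, so $u$ (or $v=u-L$) is a small perturbation of a solution along which the operator has $\varepsilon$-uniform ellipticity and $\varepsilon$-uniform $C^1$ structure, and the parabolic small perturbation theorem applies. The paper's proof is a one-line citation of Corollary 1.2 in Wang's paper, which already delivers the $C^{2,1/2}$ estimate in one shot; you instead invoke a $C^{1,\alpha_0}$ form of the perturbation estimate to first pin the gradient in $[1/4,9/4]$, and then run a Schauder bootstrap on the frozen linear equation $u_t=\bar a_{ij}u_{ij}$. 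That extra bootstrap is sound (once $|Du|$ is bounded away from $0$ the coefficients $\bar a_{ij}$ inherit Hölder and then Lipschitz regularity with $\varepsilon$-free constants, so two applications of Schauder reach $C^{2,1/2}$), but it is superfluous if one quotes Wang's result at full strength, which is exactly what the paper does. What your write-up adds beyond the paper is the explicit verification that the hypotheses of the perturbation theorem hold with constants independent of $\varepsilon$ — in particular that $|DL|\ge 1/2$ keeps $|q+DL|^2+\varepsilon^2$ bounded below on $\{|q|\le 1/4\}$ — which the paper leaves implicit in the phrase ``because $L(x)$ is a solution to \eqref{3-1} as well.''
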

\begin{proof}
We can reach this conclusion from Corollary 1.2 in \cite{Wang13}, because $L(x)$ is a solution to \eqref{3-1} as well.
\end{proof}

\begin{remark}
From the Lemmas \ref{lem3-1} and \ref{lem3-2} above, we find that $\|Dp\|_{L^\infty(Q_1)}$ is small enough, so in Lemma \ref{lem3-4} we may assume that $\|D_{x,t}p\|_{L^\infty(Q_1)}$ is smaller than some sufficiently large constant determined so that we can substitute $\|D_{x,t}p\|_{L^\infty(Q_1)}$ by that constant.
\end{remark}

In the following, we give a uniformly {\em a priori} estimate for the solution to Eq. \eqref{3-1}.
\begin{theorem}
\label{thm3-5}
Let $u$ be a smooth solution to \eqref{3-1} in $Q_1$. Suppose that $p(x,t)\in C^1(\overline{Q_1})$ and $\|Dp\|_{L^\infty(Q_1)}\leq \beta$, where $\beta$ is a sufficiently small constant depending only on $n,p_-,p_+$. Then there are two positive constants $\alpha,C$, both of which depend on $n,p_-,p_+$, such that
$$
\|Du\|_{C^\alpha(Q_{1/2})}\leq C(\|u\|_{L^\infty(Q_1)}+\varepsilon)
$$
and
$$
\sup_{Q_{1/2}}\frac{|u(x,t)-u(x,s)|}{|t-s|^{\frac{1+\alpha}{2}}}\leq C(\|u\|_{L^\infty(Q_1)}+\varepsilon).
$$
\end{theorem}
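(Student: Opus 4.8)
\emph{The plan} is to show that the oscillation of $Du$ decays geometrically along a sequence of shrinking parabolic cylinders, via a dichotomy at each scale, and then to read off the two estimates. \emph{First I would reduce} to a normalized setting: applying the interior Lipschitz estimate of Lemma~\ref{lem3-1} (after a parabolic rescaling carrying a subcylinder of $Q_1$ onto $Q_4$, which only decreases the regularization parameter and $\|Dp\|_{L^\infty}$ and preserves the form of \eqref{3-1}) and then dividing $u$ by $\|u\|_{L^\infty(Q_1)}+\varepsilon$ and by the resulting Lipschitz constant, carried out around an arbitrary interior point, the theorem reduces to the following: if $u$ solves an equation of type \eqref{3-1} in $Q_1$ with $|Du|\le1$, $\varepsilon\in(0,1)$, $\|Dp\|_{L^\infty(Q_1)}\le\beta$, then there are $\alpha\in(0,1)$ and $C$, depending only on $n,p_-,p_+$, with
$$
\operatorname{osc}_{Q_r}Du\le Cr^{\alpha}\qquad\text{and}\qquad\sup_{|t|\le r^2}|u(0,t)-u(0,0)|\le Cr^{1+\alpha}
$$
for all small $r$; the amplitude factor accumulated in the reduction is $C(n,p_-,p_+)(\|u\|_{L^\infty(Q_1)}+\varepsilon)$, and these decays (together with their analogues around every $z_0\in Q_{1/2}$) translate back to the two displayed bounds.

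\emph{Next I would set up the dichotomy.} Fix, in this order and depending only on $n,p_-,p_+$: $\eta$ from Lemma~\ref{lem3-4}; then $\varepsilon_0,\varepsilon_1$ from Lemma~\ref{lem2-2} with target $\eta$ (the ellipticity constants of \eqref{3-1} depend only on $p_\pm$); then $l:=1-\varepsilon_0^2/4$ and $\mu:=\varepsilon_1/2$; finally $\tau,\delta,\beta$ from Lemma~\ref{lem3-2} for these $l,\mu$. Put $v_k(x,t):=\tau^{-k}(1-\delta)^{-k}u(\tau^kx,\tau^{2k}t)$, which solves an equation of type \eqref{3-1} on $Q_1$ with coefficient $p(\tau^k\cdot,\tau^{2k}\cdot)$ (spatial gradient $\le\tau^k\beta\le\beta$) and regularization parameter $\varepsilon(1-\delta)^{-k}$, and suppose inductively $|Dv_k|\le1$ on $Q_1$ ($k=0$ being the base case). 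At stage $k$ I distinguish: (A) $|\{(x,t)\in Q_1:|Dv_k-e|>\varepsilon_0\}|\le\varepsilon_1|Q_1|$ for some $e\in\mathbb{S}^{n-1}$, or (B) the reverse inequality holds for every $e$. In case (B), since $Dv_k\cdot e>l$ forces $|Dv_k-e|^2\le1-2l+1<2(1-l)=\varepsilon_0^2/2$, one has $\{Dv_k\cdot e>l\}\subset\{|Dv_k-e|\le\varepsilon_0\}$, so $|\{Dv_k\cdot e\le l\}|>\varepsilon_1|Q_1|>\mu|Q_1|$; then Lemma~\ref{lem3-2}, used for every $e$, yields $|Dv_k|<1-\delta$ on $Q_\tau$, i.e.\ $|Dv_{k+1}|<1$ on $Q_1$ and $|Du|<(1-\delta)^{k+1}$ on $Q_{\tau^{k+1}}$, and the induction proceeds.

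\emph{Then I would combine the two alternatives.} If (B) holds for every $k$, then $|Du|<(1-\delta)^k$ on $Q_{\tau^k}$; with $\alpha_0:=\log(1-\delta)/\log\tau\in(0,1)$ this gives $\operatorname{osc}_{Q_r}Du\le Cr^{\alpha_0}$, and — since $u$ solves \eqref{2-1} with $a_{ij}=a^\varepsilon_{ij}(\cdot,Du)$, uniformly parabolic with ellipticity depending only on $p_\pm$ — Lemma~\ref{lem2-1} promotes $\operatorname{osc}_{B_{\tau^k}}u(\cdot,t)\le C\tau^k(1-\delta)^k$ to $\operatorname{osc}_{Q_{\tau^k}}u\le C(\tau^k)^{1+\alpha_0}$, which is the time bound. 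Otherwise let $k_0\ge0$ be the first index where (A) holds; then $|Du|\le(1-\delta)^k$ on $Q_{\tau^k}$ for $k\le k_0$, and Lemma~\ref{lem2-2} applied to $v_{k_0}$ (viewed as a solution of \eqref{2-1} with $a_{ij}=a^\varepsilon_{ij}(\cdot,Dv_{k_0})$) produces $L(x)=a+e\cdot x$ with $\|v_{k_0}-L\|_{L^\infty(Q_{1/2})}\le\eta$. Rescaling $Q_{1/2}$ onto $Q_1$ (which preserves the form \eqref{3-1}, makes $|DL|=\tfrac12\in[\tfrac12,2]$, and keeps $\|D_{x,t}p\|_{L^\infty}$ controlled as in the Remark after Lemma~\ref{lem3-4}), Lemma~\ref{lem3-4} then gives $\|v_{k_0}-L\|_{C^{2,1/2}(Q_{1/4})}\le C$, whence $\operatorname{osc}_{Q_r}Dv_{k_0}\le Cr^{1/2}$ for small $r$ and $\|\partial_tv_{k_0}\|_{L^\infty(Q_{1/4})}\le C$. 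Putting $\alpha:=\min\{\alpha_0,\tfrac12\}$ and undoing the $k_0$-fold rescaling (with $(1-\delta)^{k_0}=(\tau^{k_0})^{\alpha_0}$), for $r\ge\tau^{k_0}$ the two estimates follow from the geometric decay and for $r<\tau^{k_0}$ from the $C^{2,1/2}$-bound, in both cases after routine exponent bookkeeping using $r\le\tau^{k_0}\le1$, $\alpha\le\alpha_0$ and $\alpha\le\tfrac12$. Doing this around every $z_0\in Q_{1/2}$ and restoring the amplitude factor gives $\|Du\|_{C^\alpha(Q_{1/2})}\le C(\|u\|_{L^\infty(Q_1)}+\varepsilon)$ together with the time estimate.

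\emph{The hard part} is the dichotomy itself: the elementary but decisive observation that failure of ``$Du$ close to a fixed vector on a large portion of $Q_1$'' forces, for \emph{every} direction $e$, the one-sided density bound $|\{Du\cdot e\le l\}|>\mu|Q_1|$ that feeds Lemma~\ref{lem3-2} — which is possible only because $l$ is taken close to $1$ in terms of the $\varepsilon_0$ coming from Lemma~\ref{lem2-2}. Around it I expect the main technicalities to be the careful ordering of constants so that $\eta,\varepsilon_0,\varepsilon_1,l,\mu,\tau,\delta,\beta,\alpha$ depend only on $n,p_-,p_+$, the verification that the rescaled equations remain within the hypotheses of Lemmas~\ref{lem3-2} and \ref{lem3-4} (in particular $\|Dp\|_{L^\infty}$ never grows under the parabolic zoom), and the splicing of the degenerate geometric decay with the non-degenerate $C^{2,1/2}$-estimate into a single H\"older exponent. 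Once the parabolic-scaling gradient bound is in hand, the time regularity should be comparatively soft, via Lemma~\ref{lem2-1} in the degenerate regime and the $C^{2,1/2}$-bound otherwise.
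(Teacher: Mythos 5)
Your proposal follows essentially the same route as the paper (which in turn mirrors the proof of Theorem~4.5 in \cite{JS17}): a dichotomy at each dyadic scale between ``$Du$ stays close to a fixed direction on most of the cylinder'' (non-degenerate alternative, fed into Lemma~\ref{lem2-2} and then the small-perturbation estimate of Lemma~\ref{lem3-4}) and ``it does not'' (degenerate alternative, fed into Lemmas~\ref{lem3-2}--\ref{lem3-3} for geometric decay of $|Du|$), spliced together to yield $Du\in C^\alpha$, with time regularity then obtained from Lemma~\ref{lem2-1}; the paper even fixes the same order of constants $\eta\to(\varepsilon_0,\varepsilon_1)\to(l,\mu)\to(\tau,\delta,\beta)$. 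One small slip worth flagging: you set the alternative-(A) threshold at $\varepsilon_1|Q_1|$ while keeping $\mu=\varepsilon_1/2$, but Lemma~\ref{lem2-2} requires the exceptional set to have measure $\le\varepsilon_1$, and $|Q_1|>1$; the paper's choice $\mu=\varepsilon_1/|Q_1|$ with the threshold $\le\varepsilon_1$ is the consistent parametrization, and your version is fixed by the trivial replacement $\varepsilon_1\mapsto\varepsilon_1/|Q_1|$.
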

\begin{proof}
As the proof of Theorem 4.5 in \cite{JS17}, we can first deduce $Du\in C^\alpha(Q_{1/2})$ by combining Lemma \ref{lem3-3} and Lemmas \ref{lem2-2}, \ref{lem3-4}. To this end, we choose $\eta$ as the one in Lemma \ref{lem3-4} with $\|D_{x,t}p\|_{L^\infty(Q_1)}$ replaced by some large constant fixed. And then we take $\varepsilon_0,\varepsilon_1>0$ such small constants that Lemma \ref{lem2-2} holds. Next, we determine the constants $l$ and $\mu$ to be $1-{\varepsilon^2_0}/2$ and ${\varepsilon_1}/{|Q_1|}$ respectively.

Terminally, by $Du\in C^\alpha(Q_{1/2})$ and using Lemma \ref{lem2-1}, we show that $u$ is $C^{\frac{1+\alpha}{2}}(Q_{1/2})$-regular in the $t$-variable.
\end{proof}

\begin{lemma}
\label{lem3-7}
Let $g\in C(\partial_pQ_1)$. For $\varepsilon>0$, there is a unique solution $u^\varepsilon\in C(\overline{Q_1})\cap C^\infty(Q_1)$ of Eq. \eqref{3-1} satisfying $u^\varepsilon=g$ on $\partial_pQ_1$.
\end{lemma}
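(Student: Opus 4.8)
The plan is to establish existence and uniqueness separately, with uniqueness being the easy half. For \emph{uniqueness}, I would appeal to the comparison principle for \eqref{3-1}: since the regularized equation \eqref{3-1} is uniformly parabolic with smooth coefficients $a^\varepsilon_{ij}(x,t,q) = \delta_{ij} + (p(x,t)-2)\frac{q_iq_j}{|q|^2+\varepsilon^2}$ (smooth and Lipschitz in $q$ because the denominator never vanishes for $\varepsilon>0$), the standard comparison principle for viscosity solutions of uniformly parabolic equations applies. If $u_1,u_2$ are two solutions with the same boundary data $g$ on $\partial_p Q_1$, then $u_1\le u_2$ and $u_2\le u_1$ on $Q_1$, so $u_1\equiv u_2$. (Alternatively, since any $C^\infty(Q_1)\cap C(\overline{Q_1})$ solution is a classical solution of a uniformly parabolic equation, classical maximum-principle arguments suffice.)

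For \emph{existence}, I would use Perron's method together with the barrier construction at the parabolic boundary. First, note that constants $\pm\|g\|_{L^\infty(\partial_p Q_1)}$ are respectively a supersolution and a subsolution, so the Perron family is nonempty and bounded; the Perron function $u^\varepsilon(x,t):=\sup\{v(x,t): v \text{ is a subsolution of }\eqref{3-1},\ v\le g \text{ on }\partial_p Q_1\}$ is then a viscosity solution of \eqref{3-1} in $Q_1$ by the usual closedness-under-sup and bump-construction arguments (the equation is proper and the $a^\varepsilon_{ij}$ are continuous in all arguments, so the standard viscosity-solution machinery goes through). Attainment of the boundary data $u^\varepsilon=g$ on $\partial_p Q_1$ follows by constructing local barriers: since $Q_1=B_1\times(-1,0]$ has the exterior sphere/cone property at every boundary point, and since \eqref{3-1} is uniformly parabolic, for each $(x_0,t_0)\in\partial_p Q_1$ one builds radial (or paraboloid-type) upper and lower barriers of the form $g(x_0,t_0)\pm\big(A|x-x_0|^2 + B(t_0-t) + \omega(\cdot)\big)$, where $\omega$ is the modulus of continuity of $g$; uniform parabolicity makes these genuine super/subsolutions for suitable constants $A,B$ depending on $\lambda=p_--1$ (or $\min\{p_--1,1\}$), $\Lambda=\max\{p_+-1,1\}$, and $n$. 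This forces $u^\varepsilon\in C(\overline{Q_1})$ with $u^\varepsilon=g$ on $\partial_p Q_1$.

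Finally, \emph{interior smoothness} $u^\varepsilon\in C^\infty(Q_1)$ follows from bootstrapping: the viscosity solution $u^\varepsilon$ is, by Krylov--Safonov/Caffarelli-type estimates for uniformly parabolic fully nonlinear equations, locally $C^{1,\alpha}$; then $Du^\varepsilon$ is locally bounded and continuous, so the frozen coefficients $a^\varepsilon_{ij}(x,t,Du^\varepsilon(x,t))$ are locally Hölder, and linear parabolic Schauder theory gives $u^\varepsilon\in C^{2,\alpha}_{\mathrm{loc}}$; smoothness of $p$ in space together with smoothness of the nonlinearity in $q$ then allows the standard iteration differentiating the equation (as done in the proof of Lemma \ref{lem3-2}) to conclude $u^\varepsilon\in C^\infty(Q_1)$. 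The main obstacle is really bookkeeping rather than conceptual: one must verify that $a^\varepsilon_{ij}$ genuinely satisfies the structural hypotheses (continuity, uniform ellipticity with constants independent of the point, properness) needed to invoke both Perron's method and the Krylov--Safonov theory, and that the barriers can be chosen uniformly along $\partial_p Q_1$; since $\varepsilon>0$ is fixed throughout, none of these constants need be uniform in $\varepsilon$, which is what makes the argument routine here. I would also remark that this is precisely the point where the regularization by $\varepsilon$ pays off, since the degeneracy of the original operator $\Delta^N_{p(x,t)}$ at $Du=0$ is exactly what the $\varepsilon^2$ removes.
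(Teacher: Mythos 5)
Your route differs from the paper's, which settles the lemma in one stroke: for fixed $\varepsilon>0$ the coefficients $a^\varepsilon_{ij}(x,t,q)$ are uniformly parabolic and regular in $q$ with bounded derivatives, so solvability of the first boundary value problem with merely continuous boundary data is quoted directly from the classical quasilinear parabolic theory (\cite{LSU68}, Theorem 4.4, p.~560), and interior smoothness then follows by Schauder bootstrapping. Your uniqueness step and the Perron-plus-barriers construction are fine in principle (comparison here is much easier than for the limit equation \eqref{1-1}, since the $q$-dependence is smooth and the operator is uniformly parabolic, and none of the constants need be uniform in $\varepsilon$).

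The genuine gap is the first rung of your regularity bootstrap: the assertion that the Perron (viscosity) solution is locally $C^{1,\alpha}$ ``by Krylov--Safonov/Caffarelli-type estimates.'' Krylov--Safonov yields only interior $C^\alpha$ for uniformly parabolic equations with measurable coefficients, and the Caffarelli-type $C^{1,\alpha}$ perturbation theory applies to $u_t=F(D^2u,x,t)$ or to gradient dependence entering as a genuinely lower-order term, i.e.\ with $|F(X,q)-F(X,q')|\leq\mu|q-q'|$. In \eqref{3-1} the gradient sits inside the diffusion matrix, so the error made by freezing $q$ is of size $|Du-Du(x_0)|\,\|D^2u\|$; controlling it requires exactly the continuity of $Du$ you are trying to establish, and your subsequent Schauder step has the same circularity, since the coefficients $a^\varepsilon_{ij}(x,t,Du)$ are H\"older only once $Du$ is known to be H\"older. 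Breaking this circle is precisely what the quasilinear machinery invoked by the paper provides (a priori gradient bounds and the H\"older gradient estimate for uniformly parabolic quasilinear equations, packaged in \cite{LSU68}); alternatively one can approximate by classical solutions of smoothed problems and pass to the limit with uniform interior estimates, but some such ingredient must be supplied --- as written, your argument skips it.
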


For this lemma, we observe that Eq. \eqref{3-1} is uniformly parabolic and the coefficients $a^\varepsilon_{ij}(x,t,Du)$ are smooth with bounded derivatives for every $\varepsilon>0$. So it can be concluded from the classical quasilinear equation theory (see Theorem 4.4 of \cite{LSU68}, page 560) and the Schauder estimates.

Combining the previous conclusions, we now could establish an important intermediate result as follows.
\begin{theorem}
\label{thm3-6}
Let $u$ be a viscosity solution of \eqref{1-1} in $Q_1$. Assume that $p(x,t)\in C^1(\overline{Q_1})$ and $\|Dp\|_{L^\infty(Q_1)}\leq\beta$ with $\beta$ being a small enough constant that depends on $n,p_-,p_+$. Then there are two positive constants $\alpha\in(0,1)$ and $C$, both depending on $n,p_-$ and $p_+$, such that
$$
\|Du\|_{C^\alpha(Q_{1/2})}\leq C\|u\|_{L^\infty(Q_1)}
$$
and
$$
\sup_{Q_{1/2}}\frac{|u(x,t)-u(x,s)|}{|t-s|^{\frac{1+\alpha}{2}}}\leq C\|u\|_{L^\infty(Q_1)}.
$$
\end{theorem}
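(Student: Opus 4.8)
The plan is to prove Theorem~\ref{thm3-6} by approximation: we solve the regularized Dirichlet problem associated with \eqref{3-1}, apply the uniform \emph{a priori} estimate of Theorem~\ref{thm3-5}, and pass to the limit $\varepsilon\to 0$, using the stability and the comparison principle for \eqref{1-1} from Section~\ref{sec-4} to identify the limit of the approximants with $u$. Being both a viscosity sub- and supersolution, $u$ is simultaneously upper and lower semicontinuous, hence continuous in $Q_1$. Fix a point $z_0=(x_0,t_0)\in Q_{1/2}$ and set $\rho=1/4$, so that the parabolic cylinder $Q_\rho(z_0):=B_\rho(x_0)\times(t_0-\rho^2,t_0]$ satisfies $\overline{Q_\rho(z_0)}\subset Q_1$; in particular $u\in C(\overline{Q_\rho(z_0)})$. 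For each $\varepsilon\in(0,1)$, Lemma~\ref{lem3-7} (rescaled and translated to $Q_\rho(z_0)$) produces a unique $u^\varepsilon\in C(\overline{Q_\rho(z_0)})\cap C^\infty(Q_\rho(z_0))$ solving \eqref{3-1} in $Q_\rho(z_0)$ with $u^\varepsilon=u$ on $\partial_p Q_\rho(z_0)$.

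For frozen $Du^\varepsilon$, equation \eqref{3-1} is linear and uniformly parabolic with ellipticity constants depending only on $p_-,p_+$ and independent of $\varepsilon$; hence the maximum principle gives $\|u^\varepsilon\|_{L^\infty(\overline{Q_\rho(z_0)})}\le\|u\|_{L^\infty(\partial_p Q_\rho(z_0))}\le\|u\|_{L^\infty(Q_1)}$, and Lemma~\ref{lem2-4} supplies a single modulus of continuity $\rho^*$, independent of $\varepsilon$, controlling every $u^\varepsilon$ up to $\partial_p Q_\rho(z_0)$. Next I would invoke Theorem~\ref{thm3-5}, rescaled so that $Q_\rho(z_0)$ becomes a unit cylinder (the bound $\|Dp\|_{L^\infty}\le\beta$ is preserved, in fact improved, under parabolic rescaling, and $\|u^\varepsilon\|_{L^\infty}$ is controlled as above). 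This yields, uniformly in $\varepsilon$,
$$
\|Du^\varepsilon\|_{C^\alpha(Q_{\rho/2}(z_0))}+\sup_{Q_{\rho/2}(z_0)}\frac{|u^\varepsilon(x,t)-u^\varepsilon(x,s)|}{|t-s|^{(1+\alpha)/2}}\le C\big(\|u\|_{L^\infty(Q_1)}+\varepsilon\big),
$$
with $\alpha\in(0,1)$ and $C$ depending only on $n,p_-,p_+$ (here we use that $\rho$ is a fixed number). Combining these interior bounds with the uniform boundary modulus $\rho^*$, the Arzel\`a--Ascoli theorem gives a subsequence $\varepsilon_k\to0$ along which $u^{\varepsilon_k}\to v$ uniformly on $\overline{Q_\rho(z_0)}$, $Du^{\varepsilon_k}\to Dv$ uniformly on $\overline{Q_{\rho/2}(z_0)}$, and $v=u$ on $\partial_p Q_\rho(z_0)$. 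Since $a^\varepsilon_{ij}(x,t,q)\to\delta_{ij}+(p(x,t)-2)q_iq_j/|q|^2$ locally uniformly for $q\neq0$ as $\varepsilon\to0$, the stability of viscosity solutions shows that $v$ is a viscosity solution of \eqref{1-1} in $Q_\rho(z_0)$.

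Now $u$ and $v$ are both viscosity solutions of \eqref{1-1} in $Q_\rho(z_0)$, continuous on the closure, and equal on $\partial_p Q_\rho(z_0)$, so the comparison principle for \eqref{1-1} forces $v\equiv u$ in $Q_\rho(z_0)$. Passing to the limit $\varepsilon_k\to0$ in the displayed estimate and using $v=u$, the additive $\varepsilon$ term disappears and we obtain
$$
\|Du\|_{C^\alpha(Q_{\rho/2}(z_0))}+\sup_{Q_{\rho/2}(z_0)}\frac{|u(x,t)-u(x,s)|}{|t-s|^{(1+\alpha)/2}}\le C\|u\|_{L^\infty(Q_1)}
$$
with $C=C(n,p_-,p_+)$. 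Since $z_0\in Q_{1/2}$ was arbitrary and $\rho/2=1/8$ is a fixed radius, a finite covering of $Q_{1/2}$ by such cylinders $Q_{1/8}(z_i)$, together with the uniform $L^\infty$ bound on $Du$ applied to pairs of points lying in different members of the cover, upgrades these local estimates to $\|Du\|_{C^\alpha(Q_{1/2})}\le C\|u\|_{L^\infty(Q_1)}$ and $\sup_{Q_{1/2}}|u(x,t)-u(x,s)|/|t-s|^{(1+\alpha)/2}\le C\|u\|_{L^\infty(Q_1)}$.

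I expect the main difficulty to be the identification $v\equiv u$, which is genuinely the crux and not a formality. It rests on two facts outside the classical theory: the comparison principle for \eqref{1-1}, delicate precisely because the operator is singular at $Du=0$ and the new contribution of Section~\ref{sec-4}; and the fact that the approximants $u^\varepsilon$ attain their boundary data uniformly in $\varepsilon$, so that the limit $v$ carries the correct boundary values---this is exactly why Lemma~\ref{lem2-4} is needed, since the interior estimates of Theorem~\ref{thm3-5} by themselves say nothing near $\partial_p Q_\rho(z_0)$. Granting these, together with the stability of viscosity solutions, the remaining compactness, limit passage, and covering steps are routine.
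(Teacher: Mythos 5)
Your proposal is correct and takes essentially the same approach as the paper: regularize via Lemma~\ref{lem3-7}, derive uniform interior estimates from Theorem~\ref{thm3-5} and a uniform boundary modulus from Lemma~\ref{lem2-4}, extract a limit by Arzel\`a--Ascoli, identify it as a viscosity solution by stability (Theorem~\ref{thm4-2}), and conclude it equals $u$ by the comparison principle (Theorem~\ref{thm4-1}). The paper states this only in outline (``WLOG $u\in C(\overline{Q_1})$'' and then defers the compactness-and-identification scheme to the proof of Theorem 1.1 in \cite{JS17}), whereas you carry it out explicitly; your localization to cylinders $Q_\rho(z_0)$ followed by a covering argument is a minor but clean alternative to the paper's ``WLOG'' reduction and does not change the substance of the proof.
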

\begin{proof}
Without loss of generality, we can suppose $u\in C(\overline{Q_1})$. It follows from Lemma \ref{lem3-7} that there is a unique viscosity solution $u^\varepsilon\in C(\overline{Q_1})\bigcap C^\infty(Q_1)$ to Eq. \eqref{3-1} such that $u^\varepsilon=u$ on $\partial_p Q_1$. Based on the proof of Theorem 1.1 in \cite{JS17}, we note that it suffices to show that $u^\varepsilon$ converges to $u$ uniformly in $\overline{Q_1}$ as $\varepsilon\rightarrow0$ (up to a subsequence). To this end, we shall make use of comparison principle and stability property for viscosity solution to \eqref{1-1}, which are two counterparts to Theorems 2.9 and 2.10 in \cite{JS17}. Fortunately, these two conclusions hold true, whose proof will be presented in Section \ref{sec-4}.
\end{proof}

\subsection{H\"{o}lder regularity of spatial gradient for the case that $Dp$ is bounded, i.e.,  $\|Dp\|_{L^\infty(Q_1)}\le M$}

Set
$$
\widetilde{u}(x,t):=u(\epsilon x, \epsilon^2 t), \quad \widetilde{p}(x,t):=p(\epsilon x, \epsilon^2 t)
$$
with $0<\epsilon<1$.  By a scaling argument for Eq. \eqref{1-1}, it follows that $\widetilde{u}$ satisfies (in the viscosity sense) that
\begin{equation}
\label{3-3}
\widetilde{u}_t=\left(\delta_{ij}+(\widetilde{p}(x,t)-2)\frac{\widetilde{u}_i\widetilde{u}_j}{|D\widetilde{u}|^2}\right)\widetilde{u}_{ij} \quad \text{in } Q_{\epsilon^{-1}}.
\end{equation}

 When $\|Dp\|_{L^\infty(Q_1)}\le M$ ($p\in C^{1}(\overline{Q_1})$ and $M$ is large), then
 $$
 \|D\widetilde{p}\|_{L^\infty(Q_{\epsilon^{-1}})}\leq \epsilon \|Dp\|_{L^\infty(Q_1)}\le \epsilon M<\beta
 $$
by choosing $\epsilon$ small enough. Observe that the structure of \eqref{3-3} is similar to that of \eqref{1-1}. This permits us to employ the previous results in subsection 3.1 to show the local $C^{1,\alpha}$-regularity of the solution $\widetilde{u}$ to \eqref{3-3}. Then by rescaling back, we can deduce that the solution $u$ to \eqref{1-1} is of $C^{1,\alpha}_{\rm loc}$ provided $\|Dp\|_{L^\infty(Q_1)}\le M$. Thereby we reach the  conclusion that if function $p(x,t)\in C^1(\overline{Q_1})$, then the viscosity solution to \eqref{1-1} is locally $C^{1,\alpha}$-regular.


As has been stated above, we now complete the proof of Theorem \ref{main}.
\begin{remark}
Corresponding to Lemmas \ref{lem3-2} and \ref{lem3-3}, we find that $\epsilon$ is a small constant depending not only on $n,p_-,p_+$ but also on $\mu,l$. However, this really does not matter, since from  the proof of Theorem \ref{thm3-5} we notice that the constants $\mu,l$ will be fixed. And then by virtue of a series of dependencies, $\epsilon$ will finally depend only on $n, p_-, p_+$.
\end{remark}



\section{Comparison principle and stability for viscosity solution}
\label{sec-4}
In this section, we shall prove the comparison principle and stability properties for viscosity solutions. We shall make use of Ishii-Lions' method to show the comparison principle.

Let  $\Omega$ be a bounded domain of $\mathbb R^n$. We denote a general parabolic cylinder by $\Omega_T:=\Omega\times[0,T)$, and $\partial_p\Omega_T$ denotes its parabolic boundary. 

\begin{theorem}[comparison principle]
\label{thm4-1}
Suppose the function $p(x,t)$ in Eq. \eqref{1-1} is Lipschitz continuous. Let $u$ be a viscosity subsolution and $v$ be a continuous viscosity supersolution to \eqref{1-1}. If $u\leq v$ on $\partial_p\Omega_T$, then we can conclude
\begin{equation}
\label{4-1}
u\leq v \quad\text{in } \Omega_T.
\end{equation}
\end{theorem}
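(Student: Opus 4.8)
The plan is to argue by contradiction and run the Ishii--Lions variant of the doubling-of-variables technique; the crux will be to exploit a strictly concave spatial penalization to manufacture a negative second-order term strong enough to absorb the error produced by the $x$-dependence of $p$. First I would assume $\theta:=\sup_{\Omega_T}(u-v)>0$ and, for small $\sigma>0$, replace $u$ by $\tilde u:=u-\sigma/(T-t)$: since the spatial Hessian of $\sigma/(T-t)$ vanishes, $\tilde u$ is a \emph{strict} subsolution, meaning each of its viscosity inequalities holds with an extra $-c_0$ on the right-hand side, where $c_0:=\sigma/T^2>0$. As $\tilde u\to-\infty$ when $t\uparrow T$ and $\tilde u\le v$ on $\partial_p\Omega_T$, while $\sup_{\Omega_T}(\tilde u-v)>0$ once $\sigma$ is small, the upper semicontinuous function $\tilde u-v$ attains a positive maximum at an interior point $(\hat x,\hat t)$ with $\hat t\in(0,T)$. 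I would then double variables: for $\varepsilon>0$ maximize over $\overline{\Omega_T}\times\overline{\Omega_T}$ the function
\[
\Phi_\varepsilon(x,y,t)=\tilde u(x,t)-v(y,t)-\frac{1}{\varepsilon}\phi(|x-y|)-|x-\hat x|^{4}-(t-\hat t)^{2},
\]
where $\phi$ is smooth, strictly increasing and strictly concave --- for instance $\phi(s)=K^{-1}(1-e^{-Ks})$ --- with $K$ a large constant to be fixed only at the very end in terms of $n$, $p_\pm$ and the Lipschitz constant of $p$; the quartic and quadratic terms confine the maximum point $(x_\varepsilon,y_\varepsilon,t_\varepsilon)$ near $(\hat x,\hat x,\hat t)$. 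Standard estimates then give that, along a subsequence, $(x_\varepsilon,y_\varepsilon,t_\varepsilon)\to(\hat x,\hat x,\hat t)$, the point is interior for $\varepsilon$ small, $\varepsilon^{-1}\phi(|x_\varepsilon-y_\varepsilon|)\to0$ and $|x_\varepsilon-y_\varepsilon|\le C\varepsilon$.

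Next I would invoke the parabolic theorem on sums. Writing $a:=x_\varepsilon-y_\varepsilon$ (assuming for now $a\neq0$), $\hat a:=a/|a|$, and $B:=\varepsilon^{-1}\big(\phi''(|a|)\,\hat a\otimes\hat a+|a|^{-1}\phi'(|a|)(I-\hat a\otimes\hat a)\big)$, it furnishes $\tau_u-\tau_v=2(t_\varepsilon-\hat t)\to0$, essentially parallel vectors $\xi_u=\varepsilon^{-1}\phi'(|a|)\hat a+O(|x_\varepsilon-\hat x|^{3})$ and $\xi_v=\varepsilon^{-1}\phi'(|a|)\hat a$, and symmetric matrices $X,Y$ with $(\tau_u,\xi_u,X)\in\overline{\mathcal P}^{2,+}\tilde u(x_\varepsilon,t_\varepsilon)$, $(\tau_v,\xi_v,Y)\in\overline{\mathcal P}^{2,-}v(y_\varepsilon,t_\varepsilon)$, and, up to the $o(1)$ contribution of the quartic term,
\[
\begin{pmatrix}X&0\\0&-Y\end{pmatrix}\le\begin{pmatrix}B&-B\\-B&B\end{pmatrix}+o(1)\,I ,\qquad \|X\|+\|Y\|\le C\big(1+\|B\|\big).
\]
Testing the matrix inequality against $(\hat a,\hat a)$ and $(\hat a,-\hat a)$ yields $X\le Y+o(1)I$ together with the decisive gain $\langle(X-Y)\hat a,\hat a\rangle\le 4\varepsilon^{-1}\phi''(|a|)+o(1)$, which is strongly negative because $\phi''<0$.

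Plugging the jets into the sub- and supersolution inequalities (in their relaxed $\overline{B_1}$-form when a gradient vanishes) and subtracting, in the regime $x_\varepsilon\neq y_\varepsilon$ one obtains, with $A(z,t,\hat a):=I+(p(z,t)-2)\hat a\otimes\hat a$,
\[
c_0+o(1)\ \le\ \mathrm{tr}\big(A(x_\varepsilon,t_\varepsilon,\hat a)(X-Y)\big)+\big(p(x_\varepsilon,t_\varepsilon)-p(y_\varepsilon,t_\varepsilon)\big)\langle Y\hat a,\hat a\rangle .
\]
Since $A(x_\varepsilon,t_\varepsilon,\hat a)$ has eigenvalue $p(x_\varepsilon,t_\varepsilon)-1\ge p_--1>0$ along $\hat a$ and $1$ on $\hat a^{\perp}$, decomposing $\mathrm{tr}(A(X-Y))$ in that eigenbasis and using $X\le Y+o(1)I$ on $\hat a^\perp$ together with the gain above bounds the first term by $4(p_--1)\varepsilon^{-1}\phi''(|a|)+o(1)$, while the error term is bounded by $\mathrm{Lip}(p)\,|a|\,\|Y\|\le C\,\mathrm{Lip}(p)\,\varepsilon^{-1}+O(1)$ (the factor $|a|$ cancelling the $|a|^{-1}$ in $\|B\|$). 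Hence the right-hand side is at most $\varepsilon^{-1}\big(C\,\mathrm{Lip}(p)-2(p_--1)K\big)+O(1)$, using $\phi''(|a|)\le-K/2$ for $\varepsilon$ small; choosing $K>C\,\mathrm{Lip}(p)/(2(p_--1))$ makes this tend to $-\infty$, contradicting $c_0>0$. In the degenerate regime $x_\varepsilon=y_\varepsilon$ the coefficient matrices are evaluated at one and the same point, so the $x$-dependence of $p$ plays no role and one reaches the same contradiction exactly as in \cite[Theorem~2.9]{JS17}. Letting $\sigma\to0$ then yields \eqref{4-1}.

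I expect the main obstacle to lie precisely in this balancing: one must guarantee that the concavity-generated term $\sim(p_--1)\varepsilon^{-1}|\phi''(|a|)|$ genuinely dominates the Lipschitz-in-$x$ error (a plain quadratic penalization produces no such negative term and the scheme collapses), while simultaneously keeping the localization from corrupting the direction of the test gradients as the matrices $X,Y$ blow up; the careful, and somewhat delicate, treatment of the vanishing-gradient case through the robustness of the viscosity notion is the secondary technical point.
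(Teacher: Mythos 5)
Your overall strategy (Ishii--Lions doubling with a strictly concave penalization, using the uniform ellipticity constant $p_--1$ to absorb the Lipschitz-in-$x$ error) is a legitimate classical alternative to what the paper does, but as written it has a genuine hole at the diagonal. Your penalization $\varepsilon^{-1}\phi(|x-y|)$ with $\phi(s)=K^{-1}(1-e^{-Ks})$ satisfies $\phi'(0)=1$, so $(x,y)\mapsto\phi(|x-y|)$ is not differentiable (let alone $C^{2}$) across $x=y$. In regularity proofs this does not matter, because there a positive maximum of $u(x,t)-u(y,t)-L\phi(|x-y|)-\dots$ automatically forces $x_\varepsilon\neq y_\varepsilon$; in a comparison argument with two \emph{different} functions nothing excludes the maximum from landing on the diagonal (it happens, e.g., whenever $u$ and $v$ are smooth with equal gradients at the touching point). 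At such a point the parabolic theorem on sums cannot be applied to your test function at all, and your claim that one then concludes ``exactly as in \cite{JS17}'' does not transfer: in \cite{JS17}, and in the present paper, the case $x_j=y_j$ is manageable precisely because the penalization $\tfrac{j}{4}|x-y|^{4}$ is smooth across the diagonal with vanishing gradient and Hessian, so one can test with it directly and invoke the relaxed ($q\in\overline{B_1}$) form of the definition at zero gradient. Your exponential $\phi$ was chosen exactly so that $\phi'>0$ off the diagonal, and that same choice destroys the diagonal case; smoothing $\phi$ near $0$ (e.g. $\phi(\sqrt{|x-y|^2+\delta^2})$) reintroduces the vanishing-gradient issue and requires a further argument you have not supplied.

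A secondary, fixable gap: the localization term $|x-\hat x|^{4}$ tilts the test gradient for $u$ to $\xi_u=\varepsilon^{-1}\phi'(|a|)\hat a+4|x_\varepsilon-\hat x|^{2}(x_\varepsilon-\hat x)$, and the induced error in the coefficient matrix must be paid against $\|X\|\le C(1+\|B\|)\sim\varepsilon^{-1}\bigl(K+|a|^{-1}\bigr)$; the resulting term is of size $|x_\varepsilon-\hat x|^{3}/|a|$, which is not $o(1)$ on its face (there is no relation between $|x_\varepsilon-\hat x|$ and $|a|$, and $v$ is merely continuous), and you do not control it. The simplest cure is to drop the localization altogether: since $u\le v$ on $\partial_p\Omega_T$ and the supremum of $\tilde u-v$ is strictly positive, the doubled maxima are interior for large parameters, exactly as in the paper. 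For contrast, the paper sidesteps the whole absorption mechanism: it keeps the quartic penalization and exploits that $A(x,t,\eta)=I+(p(x,t)-2)\tfrac{\eta}{|\eta|}\otimes\tfrac{\eta}{|\eta|}$ has a matrix square root that is Lipschitz in $(x,t)$ uniformly in the direction; plugging the columns of $A^{1/2}$ into the theorem-of-sums inequality bounds the trace difference by $Cj|x_j-y_j|^{2}\bigl(|x_j-y_j|^{2}+|t_j-s_j|^{2}\bigr)\to0$, so no concavity gain is needed and the diagonal and the Lipschitz error are handled simultaneously. Your route can be made to work, but the diagonal case must be treated honestly before the proof is complete.
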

\begin{proof}
For convenience, we can assume $v$ is a strict supersolution, i.e.,
$$
v_t-\left(\Delta v+(p(x,t)-2)\left\langle D^2v\frac{Dv}{|Dv|},\frac{Dv}{|Dv|}\right\rangle\right)>0
$$
in the viscosity sense by considering $w:=v+\frac{\varepsilon}{T-t}$ instead, and $w\rightarrow\infty$ as $t\rightarrow T$. Indeed, we suppose $\varphi\in C^2(\Omega_T)$  such that $w-\varphi$ has local minimum at $(x_0,t_0)\in\Omega_T$, then so does $v-\phi$ by letting $\phi(x,t):=\varphi(x,t)-\frac{\varepsilon}{T-t}$. Notice that
\begin{align*}
&D\phi(x_0,t_0)=D\varphi(x_0,t_0),\\
&D_t\phi(x_0,t_0)=D_t\varphi(x_0,t_0)-\frac{\varepsilon}{(T-t_0)^2},
\end{align*}
and
$$
D^2\phi(x_0,t_0)=D^2\varphi(x_0,t_0).
$$
Because of $v$ being a viscosity supersolution, we obtain
\begin{align*}
0&\leq D_t\phi(x_0,t_0)-\left(\mathrm{tr} D^2\phi(x_0,t_0)+(p(x_0,t_0)-2)\left\langle D^2\phi(x_0,t_0)\frac{\phi(x_0,t_0)}{|\phi(x_0,t_0)|},\frac{\phi(x_0,t_0)}{|\phi(x_0,t_0)|}\right\rangle\right)\\
&=D_t\varphi(x_0,t_0)-\frac{\varepsilon}{(T-t_0)^2}\\
&\quad-\left(\mathrm{tr} D^2\varphi(x_0,t_0)+(p(x_0,t_0)-2)\left\langle D^2\varphi(x_0,t_0)\frac{\varphi(x_0,t_0)}{|\varphi(x_0,t_0)|},\frac{\varphi(x_0,t_0)}{|\varphi(x_0,t_0)|}\right\rangle\right),
\end{align*}
when $D\varphi(x_0,t_0)\neq0$. Here we denote by $\mathrm{tr}M$ the trace of matrix $M$. Furthermore,
\begin{align*}
0&<\frac{\varepsilon}{(T-t_0)^2}\\
&\leq D_t\varphi(x_0,t_0)-\left(\mathrm{tr} D^2\varphi(x_0,t_0)+(p(x_0,t_0)-2)\left\langle D^2\varphi(x_0,t_0)\frac{\varphi(x_0,t_0)}{|\varphi(x_0,t_0)|},\frac{\varphi(x_0,t_0)}{|\varphi(x_0,t_0)|}\right\rangle\right).
\end{align*}
When $D\varphi(x_0,t_0)=0$, we get for $|\eta|\leq1\quad (\eta\in\overline{B_1(0)})$
$$
0\leq D_t\phi(x_0,t_0)-(\mathrm{tr} D^2\phi(x_0,t_0)+(p(x_0,t_0)-2)\langle D^2\phi(x_0,t_0)\cdot\eta,\eta\rangle).
$$
Namely,
$$
0<\frac{\varepsilon}{(T-t_0)^2}\leq D_t\varphi(x_0,t_0)-(\mathrm{tr} D^2\varphi(x_0,t_0)+(p(x_0,t_0)-2)\langle D^2\varphi(x_0,t_0)\cdot\eta,\eta\rangle).
$$
In conclusion, we have verified that $w:=v+\frac{\varepsilon}{T-t}$ is a strict supersolution.

To demonstrate this conclusion, we argue by contradiction. Suppose \eqref{4-1} is not valid. Then it holds that for some $(\widehat{x},\widehat{t})\in \Omega\times(0,T)$, we have
$$
\theta:=u(\widehat{x},\widehat{t})-v(\widehat{x},\widehat{t})=\sup_{\Omega_T}(u-v)>0.
$$
Set
$$
\Psi_j(x,y,t,s)=u(x,t)-v(y,s)-\Phi_j(x,y,t,s),
$$
where $\Phi_j(x,y,t,s)=\frac{j}{q}|x-y|^q+\frac{j}{2}(t-s)^2$ with $q>\max\{2,\frac{p_-}{p_--1}\}$.

Without loss of generality, in what follows, we take a special value of $q$, i.e.,  $q=4$. Let $(x_j,y_j,t_j,s_j)$ be the maximum point of $\Psi_j$ in $\overline{\Omega}\times\overline{\Omega}\times[0,T)\times[0,T)$. We can prove that $(x_j,y_j,t_j,s_j)\in\Omega\times\Omega\times(0,T)\times(0,T)$ and $(x_j,y_j,t_j,s_j)\rightarrow(\widehat{x},\widehat{x},\widehat{t},\widehat{t})$ as $j\rightarrow\infty$ by the Lemma 7.2 in \cite{CIL92}.

\textbf{Case 1.} If $x_j=y_j$, then
\begin{equation*}
\begin{split}
&0=D_x\Phi_j(x_j,y_j,t_j,s_j)=D_y\Phi_j(x_j,y_j,t_j,s_j),\\
&0=D^2_x\Phi_j(x_j,y_j,t_j,s_j)=D^2_y\Phi_j(x_j,y_j,t_j,s_j).
\end{split}
\end{equation*}
Observe that
$$
u(x_j,t_j)-v(y_j,s_j)-\Phi_j(x_j,y_j,t_j,s_j)\geq u(x_j,t_j)-v(y,s)-\Phi_j(x_j,y,t_j,s).
$$
Denote
$$
\Theta(y,s):=-\Phi_j(x_j,y,t_j,s)+\Phi_j(x_j,y_j,t_j,s_j)+v(y_j,s_j).
$$
Obviously, $v(y,s)-\Theta(y,s)$ reaches the local minimum at $(y_j,s_j)$. Due to $v$ a strict supersolution, we arrive at
\begin{align*}
0&<\partial_s\Theta(y_j,s_j)-(\mathrm{tr} D^2\Theta(y_j,s_j)+(p(y_j,s_j)-2)\langle D^2\Theta(y_j,s_j)\cdot\eta,\eta\rangle)\\
&=j(t_j-s_j)
\end{align*}
for $|\eta|\leq1$. Analogously, letting $\beta(x,t):=\Phi_j(x,y_j,t,s_j)-\Phi_j(x_j,y_j,t_j,s_j)+u(x_j,t_j)$, we can
obtain
$$
0\geq\partial_t\beta(x_j,t_j)=j(t_j-s_j).
$$
From the previous two inequalities, we get
$$
0<j(t_j-s_j)-j(t_j-s_j)=0.
$$
This is a contradiction.

\textbf{Case 2.} If $x_j\neq y_j$, we have the following results.

By Theorem of sums (see \cite{CIL92}), for every $\mu>0$, there are $X_j,Y_j\in\mathcal{S}^n$ such that
$$
(\partial_t\Phi_j,D_x\Phi_j,X_j)\in\overline{\mathcal{P}}^{2,+}u(x_j,t_j),\quad (-\partial_s\Phi_j,-D_y\Phi_j,Y_j)\in\overline{\mathcal{P}}^{2,-}v(y_j,s_j)
$$
and
\begin{equation*}
\left(\begin{array}{cc}
X_j & \\
 &-Y_j
\end{array}
\right)\leq D^2\phi_j+\frac{1}{\mu}(D^2\phi_j)^2,
\end{equation*}
where all the derivatives are computed at $(x_j,y_j,t_j,s_j)$ and
\begin{equation*}
D^2\phi_j=\left(\begin{array}{cc}
B&-B\\
-B&B
\end{array}
\right)
\end{equation*}
with $B:=j|x_j-y_j|^2I+2j(x_j-y_j)\otimes(x_j-y_j)$.
Further, taking $\mu=j$ gets
\begin{equation}
\label{4-2}
\begin{split}
\left(\begin{array}{cc}
X_j & \\
 &-Y_j
\end{array}
\right)\leq& j(|x_j-y_j|^2+2|x_j-y_j|^4)\left(\begin{array}{cc}
I&-I\\
-I&I
\end{array}
\right)\\
&+2j(1+8|x_j-y_j|^2)\left(\begin{array}{cc}
G&-G\\
-G&G
\end{array}
\right),
\end{split}
\end{equation}
where $G:=(x_j-y_j)\otimes(x_j-y_j)$.
Note that \eqref{4-2} implies for any $\xi,\zeta\in\mathbb{R}^n$
\begin{equation}
\label{4-3}
\langle X_j\xi,\xi\rangle-\langle Y_j\zeta,\zeta\rangle\leq(3j|x_j-y_j|^2+18j|x_j-y_j|^4)|\xi-\zeta|^2.
\end{equation}
By virtue of the equivalent definition of viscosity solution emphasized by terminology of semi jets, we obtain
\begin{equation}
\label{4-4}
-\partial_s\Phi_j-\left(\mathrm{tr} Y_j+(p(y_j,s_j)-2)\left\langle Y_j\frac{-D_y\Phi_j}{|D_y\Phi_j|},\frac{-D_y\Phi_j}{|D_y\Phi_j|}\right\rangle\right)>0,
\end{equation}
and
\begin{equation}
\label{4-5}
\partial_t\Phi_j-\left(\mathrm{tr} X_j+(p(x_j,t_j)-2)\left\langle X_j\frac{D_x\Phi_j}{|D_x\Phi_j|},\frac{D_x\Phi_j}{|D_x\Phi_j|}\right\rangle\right)\leq0.
\end{equation}
Here we observe that
$$
\partial_t\Phi_j=j(t_j-s_j)=-\partial_s\Phi_j
$$
and
$$
\eta_j:=D_x\Phi_j=-D_y\Phi_j=j|x_j-y_j|^2(x_j-y_j).
$$
$\eta_j$ is nonzero, which is crucial. Denote
$$
A(x,t,\eta):=I+(p(x,t)-2)\frac{\eta}{|\eta|}\otimes\frac{\eta}{|\eta|},
$$
which is positive definite so that it possesses matrix square root denoted by $A^\frac{1}{2}(x,t,\eta)$. We denote the $k$-th column of $A^\frac{1}{2}(x,t,\eta)$ as $A^\frac{1}{2}_k(x,t,\eta)$.
Subtracting \eqref{4-5} from \eqref{4-4}, we derive
\begin{equation}
\label{4-6}
\begin{split}
0&<\mathrm{tr}(A(x_j,t_j,\eta_j)X_j)-\mathrm{tr}(A(y_j,s_j,\eta_j)Y_j)\\
&=\sum^n_{k=1}X_jA^\frac{1}{2}_k(x_j,t_j,\eta_j)\cdot A^\frac{1}{2}_k(x_j,t_j,\eta_j)-\sum^n_{k=1}Y_jA^\frac{1}{2}_k(y_j,s_j,\eta_j)\cdot A^\frac{1}{2}_k(y_j,s_j,\eta_j)\\
&\leq Cj|x_j-y_j|^2\|A^\frac{1}{2}(x_j,t_j,\eta_j)-A^\frac{1}{2}(y_j,s_j,\eta_j)\|^2_2\\
&\leq \frac{Cj|x_j-y_j|^2}{(\lambda_{\rm min}(A^\frac{1}{2}(x_j,t_j,\eta_j))+\lambda_{\rm min}(A^\frac{1}{2}(y_j,s_j,\eta_j))^2}
\|A(x_j,t_j,\eta_j)-A(y_j,s_j,\eta_j)\|^2_2,
\end{split}
\end{equation}
where the penultimate inequality is obtained by \eqref{4-3} and the last inequality is derived from the local Lipschitz continuity of $A\mapsto A^\frac{1}{2}$ (see \cite{HJ85}, page 410). Here $\lambda_{\rm min}(M)$ denotes the smallest eigenvalue of a symmetric $n\times n$ matrix $M$.

Now we estimate
\begin{equation*}
\begin{split}
\|A(x_j,t_j,\eta_j)-A(y_j,s_j,\eta_j)\|^2_2&=\left\|(p(x_j,t_j)-p(y_j,s_j))\frac{\eta_j}{|\eta_j|}\otimes\frac{\eta_j}{|\eta_j|}\right\|^2_2\\
&=|(p(x_j,t_j)-p(y_j,s_j))|^2\\
&\leq C(|x_j-y_j|^2+|t_j-s_j|^2),
\end{split}
\end{equation*}
where in the last inequality we employ the condition that $p(x,t)$ is Lipschitz continuous, i.e., $|p(x,t)-p(y,s)|\leq C|(x-y,t-s)|$. Moreover,
\begin{equation*}
\lambda_{\rm min}(A^\frac{1}{2}(x,t,\eta))=(\lambda_{\rm min}(A(x,t,\eta))^\frac{1}{2}=\min\{1,\sqrt{p_--1}\}.
\end{equation*}
Hence \eqref{4-6} turns into
\begin{align*}
0&<\frac{Cj|x_j-y_j|^2}{4\min\{1,p_--1\}}(|x_j-y_j|^2+|t_j-s_j|^2)\\
&=Cj|x_j-y_j|^4+Cj|t_j-s_j|^2|x_j-y_j|^2.
\end{align*}

On the other hand, we note that
\begin{equation*}
\begin{split}
u(x_j,t_j)-v(x_j,t_j)&\leq \max_{\overline{\Omega}\times[0,T)}\{u(x,t)-v(x,t)\}\\
&\leq u(x_j,t_j)-v(y_j,s_j)-\frac{j}{4}|x_j-y_j|^4-\frac{j}{2}(t_j-s_j)^2.
\end{split}
\end{equation*}
So we further get
\begin{align*}
\frac{j}{4}|x_j-y_j|^4+\frac{j}{2}(t_j-s_j)^2&\leq v(x_j,t_j)-v(y_j,s_j)\\
&\rightarrow v(\widehat{x},\widehat{t})-v(\widehat{x},\widehat{t})=0,
\end{align*}
by sending $j\rightarrow\infty$, where we have assumed $v$ is continuous in $\Omega_T$.

Consequently, we reach a contradiction that
$$
0<Cj|x_j-y_j|^4+Cj|t_j-s_j|^2|x_j-y_j|^2\rightarrow0
$$
as $j\rightarrow\infty$, observing that both $x_j$ and $y_j$ converge to the point $\widehat{x}$.
\end{proof}

We now conclude this section with stability properties of viscosity solution.

\begin{theorem}[stability]
\label{thm4-2}
Let $\{u_i\}$ be a sequence of viscosity solutions to \eqref{3-1} in $Q_1$ with $\varepsilon_i\geq0$ that $\varepsilon_i\rightarrow0$, and $u_i\rightarrow u$ locally uniformly in $Q_1$. Then $u$ is a viscosity solution to \eqref{1-1} in $Q_1$.
\end{theorem}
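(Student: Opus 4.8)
I would prove Theorem~\ref{thm4-2} by the classical stability argument for viscosity solutions, the only real subtlety being the branch of the definition that is governed by the (possibly vanishing) gradient of the test function. The plan is to show that $u$ is a viscosity subsolution to \eqref{1-1}; the supersolution property then follows from the mirror argument with test functions touching from below, and together these give that $u$ is a viscosity solution. Since $u$ is a locally uniform limit of continuous functions it is continuous, hence in particular upper semicontinuous. Fix $\varphi\in C^2(Q_1)$ such that $u-\varphi$ has a local maximum at $(x_0,t_0)\in Q_1$. Replacing $\varphi(x,t)$ by $\varphi(x,t)+|x-x_0|^4+(t-t_0)^2$, which changes none of $\varphi_t$, $D\varphi$, $D^2\varphi$ at $(x_0,t_0)$, I may assume this maximum is strict over a compact parabolic neighbourhood $K:=\overline{B_r(x_0)}\times[t_0-r^2,t_0]\subset Q_1$. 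Because $u_i\to u$ uniformly on $K$, any maximizers $(x_i,t_i)$ of $u_i-\varphi$ over $K$ satisfy $(x_i,t_i)\to(x_0,t_0)$, so $(x_i,t_i)$ is a local maximum of $u_i-\varphi$ in $Q_1$ for all large $i$.

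Next I would extract a single inequality holding uniformly in $i$. Since $u_i$ is a viscosity subsolution to \eqref{3-1} with parameter $\varepsilon_i$ — and for $\varepsilon_i>0$ the operator in \eqref{3-1} is continuous in all of its arguments (there is no singularity), while for $\varepsilon_i=0$ equation \eqref{3-1} coincides with \eqref{1-1} — testing with $\varphi$ at the point $(x_i,t_i)$ produces a vector $q_i\in\overline{B_1(0)}$ such that
\[
\varphi_t(x_i,t_i)\le\Delta\varphi(x_i,t_i)+(p(x_i,t_i)-2)\langle D^2\varphi(x_i,t_i)q_i,q_i\rangle .
\]
Indeed, whenever $|D\varphi(x_i,t_i)|^2+\varepsilon_i^2>0$ one takes $q_i:=D\varphi(x_i,t_i)/\sqrt{|D\varphi(x_i,t_i)|^2+\varepsilon_i^2}$, which has norm at most $1$; in the only remaining case, $\varepsilon_i=0$ and $D\varphi(x_i,t_i)=0$, such a $q_i\in\overline{B_1(0)}$ is furnished directly by the definition of viscosity solution to \eqref{1-1}.

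Finally I would pass to the limit, distinguishing two cases according to $D\varphi(x_0,t_0)$. If $D\varphi(x_0,t_0)\ne0$, then $D\varphi(x_i,t_i)\ne0$ for large $i$ and, since $\varepsilon_i\to0$, $q_i\to D\varphi(x_0,t_0)/|D\varphi(x_0,t_0)|$; letting $i\to\infty$ and using the continuity of $p$ and of the derivatives of $\varphi$ yields the required inequality with the unit vector $D\varphi(x_0,t_0)/|D\varphi(x_0,t_0)|$. If $D\varphi(x_0,t_0)=0$, then, $\overline{B_1(0)}$ being compact, a subsequence of $q_i$ converges to some $q\in\overline{B_1(0)}$, and passing to the limit gives
\[
\varphi_t(x_0,t_0)\le\Delta\varphi(x_0,t_0)+(p(x_0,t_0)-2)\langle D^2\varphi(x_0,t_0)q,q\rangle
\]
for that $q$, which is exactly what the definition demands at a vanishing-gradient point. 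Hence $u$ is a viscosity subsolution to \eqref{1-1}; the supersolution case is handled identically with minima and reversed inequalities.

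The step I expect to be the main (and essentially only) obstacle is precisely the bookkeeping in the previous paragraph: one must rewrite the subsolution inequality for $u_i$ in the unified form $\langle D^2\varphi\, q_i,q_i\rangle$ with $|q_i|\le1$ \emph{before} passing to the limit, so that the limit automatically lands in the correct branch of the definition of viscosity solution to \eqref{1-1} — the uniform bound $|q_i|\le1$ together with compactness of the closed unit ball does exactly this. Everything else is routine and relies only on the local uniform convergence $u_i\to u$ and the continuity of $p$ and of the coefficients.
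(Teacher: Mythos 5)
Your proposal is correct and follows essentially the same route as the paper's proof: pass to touching points $(x_i,t_i)\to(x_0,t_0)$ via local uniform convergence, write the viscosity inequality for $u_i$ at $(x_i,t_i)$, and let $i\to\infty$, splitting into the cases $D\varphi(x_0,t_0)\neq0$ and $D\varphi(x_0,t_0)=0$ with compactness of $\overline{B_1}(0)$ handling the degenerate branch. Your unified formulation with $q_i=D\varphi(x_i,t_i)/(|D\varphi(x_i,t_i)|^2+\varepsilon_i^2)^{1/2}$ (and the strict-maximum modification of $\varphi$) is just a streamlined bookkeeping of the same argument, in which the paper instead treats the subcases $D\varphi(x_i,t_i)\neq0$ and $D\varphi(x_i,t_i)=0$ separately.
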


\begin{proof}
We only show that $u$ is a viscosity supersolution of \eqref{1-1}. The proof of $u$ being a subsolution is similar to that. Suppose $\varphi\in C^2(Q_1)$ such that $u-\varphi$ attains a local minimum at $(x_0,t_0)\in Q_1$. We know, from $u_i$ converging to $u$ locally uniformly, that there is $(x_i,t_i)\rightarrow(x_0,t_0)$ such that
$$
u_i-\varphi \quad\text{has local minimum at }  (x_i,t_i).
$$

If $D\varphi(x_0,t_0)\neq0$, then by $u_i$ viscosity supersolution, we obtain
\begin{align*}
\partial_t\varphi(x_i,t_i)\geq& \mathrm{tr} D^2\varphi(x_i,t_i)+(p(x_i,t_i)-2)\\
&\cdot\left\langle D^2\varphi(x_i,t_i)\frac{D\varphi(x_i,t_i)}{(|D\varphi(x_i,t_i)|^2+i^{-2})^\frac{1}{2}},
\frac{D\varphi(x_i,t_i)}{(|D\varphi(x_i,t_i)|^2+i^{-2})^\frac{1}{2}}\right\rangle.
\end{align*}
Letting $i\rightarrow\infty$, the above inequality becomes
$$
\partial_t\varphi(x_0,t_0)\geq F(x_0,t_0,D\varphi(x_0,t_0),D^2\varphi(x_0,t_0)),
$$
where $F(x,t,\eta,X):=\mathrm{tr} X+(p(x,t)-2)\langle X\frac{\eta}{|\eta|},\frac{\eta}{|\eta|}\rangle$.

If $D\varphi(x_0,t_0)=0$, we divide the proof into two cases. When $D\varphi(x_i,t_i)\neq0$ for $i$ large enough, it follows that
\begin{align*}
\partial_t\varphi(x_i,t_i)\geq& \mathrm{tr} D^2\varphi(x_i,t_i)+(p(x_i,t_i)-2)\\
&\cdot\left\langle D^2\varphi(x_i,t_i)\frac{D\varphi(x_i,t_i)}{(|D\varphi(x_i,t_i)|^2+i^{-2})^\frac{1}{2}},
\frac{D\varphi(x_i,t_i)}{(|D\varphi(x_i,t_i)|^2+i^{-2})^\frac{1}{2}}\right\rangle.
\end{align*}
For some vector $\xi\in \mathbb{R}^n$ with $|\xi|\leq1$, we deduce by sending $i\rightarrow\infty$
$$
\partial_t\varphi(x_0,t_0)\geq \mathrm{tr}D^2\varphi(x_0,t_0)+(p(x_0,t_0)-2)\langle D^2\varphi(x_0,t_0)\xi,\xi\rangle.
$$
When $D\varphi(x_i,t_i)\equiv0$ for $i$ sufficiently large, by the definition of supersolution, we have
$$
\partial_t\varphi(x_i,t_i)\geq \mathrm{tr}D^2\varphi(x_i,t_i)+(p(x_i,t_i)-2)\langle D^2\varphi(x_i,t_i)\xi_i,\xi_i\rangle,
$$
where $\xi_i\in\mathbb{R}^n$ satisfies $|\xi_i|\leq1$. Thus it follows that for some vector $|\xi|\leq1$
$$
\partial_t\varphi(x_0,t_0)\geq \mathrm{tr}D^2\varphi(x_0,t_0)+(p(x_0,t_0)-2)\langle D^2\varphi(x_0,t_0)\xi,\xi\rangle,
$$
as $i\rightarrow\infty$. Therefore, we prove that $u$ is a viscosity supersolution.
\end{proof}

\section*{Acknowledgments}

The authors wish to thank Prof. Tianling Jin for some very helpful conversations on this work. This work was supported by the National Natural Science Foundation of China (No. 11671111).

\end{document}